\documentclass[hidelinks,onefignum,onetabnum]{siamart251216}
\usepackage{amsmath,amssymb}
\usepackage{hyperref}
\usepackage{algorithm}
\usepackage{algpseudocode}
\usepackage{cleveref}

\newtheorem{remark}{Remark}

\DeclareMathOperator*{\argmin}{arg\,min}
\DeclareMathOperator*{\rank}{rank}

\algnewcommand{\TRUE}{\textbf{true}}
\algnewcommand{\FALSE}{\textbf{false}}

\newcommand{\mGKS}{mGKS}
\newcommand{\R}{\mathbb{R}}
\newcommand{\Rmn}{\mathbb{R}^{m\times n}}
\newcommand{\Rnm}{\mathbb{R}^{n\times m}}
\newcommand{\Rnn}{\mathbb{R}^{n\times n}}
\newcommand{\Rnk}{\mathbb{R}^{n\times k}}

\newcommand{\Rmk}{\mathbb{R}^{m\times k}}
\newcommand{\Rkk}{\mathbb{R}^{k\times k}}

\newcommand{\C}{\mathcal{C}}
\newcommand{\J}{\mathcal{J}}
\newcommand{\B}{\mathcal{B}}
\newcommand{\hV}{\hat{V}}
\newcommand{\norm}[1]{\lVert #1 \rVert}
\newcommand{\smin}{\sigma_{\min}}

\newcommand{\fnorm}[1]{\lVert #1 \rVert_F}
\newcommand{\tnorm}[1]{\lVert #1 \rVert_2}
\newcommand{\mmn}{\min{(m,n)}}
\newcommand{\mc}{{:}}
\newcommand{\bigO}{\mathcal{O}}

\title{Computing Strong Rank-Revealing Factorizations for Matrices with Orthonormal Rows}
\author{Anil Damle\thanks{Department of Computer Science, Cornell University, Ithaca, NY 14853. (\email{damle@cornell.edu})\funding{AD is partially funded by the DOE Office of Science (DE-SC0025453).}}}
\date{\today}

\begin{document}
\maketitle

\begin{abstract}
We show that a pivoting strategy due to Stewart (based on work by Bischof) computes a strong rank-revealing factorization when applied to a matrix with orthonormal rows. When paired with the classical column selection algorithm of Golub, Klema, and Stewart (GKS) it helps achieve rank-$k$ approximation accuracy bounds and basis conditioning as good as those from applying a strong rank-revealing factorization directly to A. We then extend this framework in two directions: (1) providing analysis of GKS when only approximations of right singular vectors are available and (2) providing a randomized variant of the pivoting strategy for matrices with orthonormal rows that achieves the same theoretical guarantees but can return the desired subset two orders of magnitude faster than the deterministic variant.
\end{abstract}

\section{Introduction}
\label{sec:introduction}

Given a matrix $A\in\Rmn$ a common task is to compute $k$ ``good'' columns of $A$. Let $\C$ denote a subset of $[n]$ with $\lvert \C \rvert = k$. Two simple criteria we may seek to optimize are the error approximating $A$ by a rank-k matrix constrained to use $A(:,\C)$ as a basis for the column space, i.e.,
\begin{equation}
\label{eq:lrapprox}
	\|A-P_\C A\|_{\{2,F\}},
\end{equation}
where $P_\C$ is the orthogonal projector onto the span of $A(:,\C)$, and the quality of the basis, i.e.,
\begin{equation}
\label{eq:smin}
	\smin{(A(:,\C))}.
\end{equation}
The latter criterion is, e.g., particularly relevant when searching for basic solutions to least-squares problems. It is in this context that Golub, Klema, and Stewart introduced their scheme for selecting $\C$~\cite{golub_klema_stewart}; henceforth we refer to their scheme as GKS.

GKS proceeds in two stages. First, the leading $k$ right singular vectors of $A$ are computed and we denote the associated matrix $V_k\in\R^{n\times k}$. Then, a column-pivoted QR factorization~\cite{golub1965numerical,businger1965linear} is applied to $V_k^T$ to select $\C$. The first bounds for this method in terms of the quality of the pivoted QR factorization can be found in~\cite{hong_pan} for the two norm and~\cite{armstrong2025structure} for the Frobenius norm. The key point is that GKS reduces the column based low-rank approximation and basis conditioning problems to computing a rank-revealing QR factorization of a matrix with orthonormal rows.

When computing the necessary pivoted QR factorization of $V_k^T$ one has two choices. The common choice is an algorithm due to Golub and Businger~\cite{golub1965numerical,businger1965linear}. Implementations of this algorithm are widely available and the performance is often good in practice. However, its worst case theoretical performance is abysmal (with bounds that grow exponentially in $k$). In fact, recent work~\cite{chen2026iteris} provides pathological examples (for sufficiently large matrices) that exhibit poor behavior even under the orthonormal row constraint---addressing an open problem~\cite{Simons2025}. If good theoretical guarantees are desired, algorithms that compute strong rank-revealing QR factorizations (such as that of Gu and Eisenstat~\cite{gu_srrqr}) can be used. However, these algorithms can be much less efficient in practice (particularly as the desired bounds are strengthened).

A pivoting scheme due to Stewart~\cite{stewart1990incremental} (based on work by Bischof~\cite{bischof1990incremental}) is preferable to both options in this context. We refer to the scheme as Bischof-Stewart pivoting. In \cref{sec:BStheory} we show that when applied to a matrix with orthonormal rows Bischof-Stewart pivoting computes a strong rank-revealing QR factorization with stronger theoretical guarantees than provided by the Gu and Eisenstat algorithm when limited to polynomial computational complexity. Then, in \cref{sec:randBS} we provide a randomized variant of the pivoting strategy for matrices with orthonormal rows that provides the same theoretical guarantees while being much faster in practice. While the runtime is not deterministic, it strongly outperforms (e.g., by up to two orders of magnitude) the deterministic variant in all cases we consider.

After the introduction of necessary background material in~\cref{sec:background} and the analysis of Bischof-Stewart pivoting in \cref{sec:BStheory}, \cref{sec:mGKStheory} studies the effectiveness of GKS when augmented with Bischof-Stewart pivoting, which we refer to as modified GKS (\mGKS). In the $\tnorm{\cdot}$ norm, we show that for~\cref{eq:lrapprox} \mGKS\ is as good as a strong rank-revealing QR factorization (see, e.g.,~\cite{gu_srrqr}); the same is true for~\cref{eq:smin}. Specifically, we show that \mGKS\ selects $\C$ such that 
\[
	\tnorm{A-P_\C A} \leq \sqrt{1+k(n-k)}\sigma_{k+1}(A)
\]
and 
\[
	\smin(A(:,\C))\geq \frac{\sigma_k(A)}{\sqrt{1+k(n-k)}}.
\]
When $m\geq n$ this algorithm provides the same theoretical guarantees as recent work by Osinsky~\cite{osinsky2025close} in $\tnorm{\cdot}$ while being much more computationally efficient since excessive residual computations are avoided.\footnote{When $m<n$ Osinsky's algorithm admits better bounds since $n$ can be replaced by $\mmn$; \mGKS\ is oblivious to $m$ so while practical performance may be similar it is unlikely an equivalent bound could be produced.} In the $\fnorm{\cdot}$ \mGKS\ may (theoretically) lag behind Osinsky's algorithm for~\cref{eq:lrapprox}, but using results from~\cite{armstrong2025structure} we are able to characterize matrices where the bounds are similar (though, again, \mGKS\ is cheaper). 

To address situations where $V_k$ is not available and is, instead, approximated by some $\hV_k,$ \cref{sec:mGKStheory} highlights how in either the two or Frobenius norm \mGKS\ simply pays a sub-optimality factor of
\[
	\frac{\norm{A-A\hV_k\hV_k^T}_{\{2,F\}}}{\norm{A-AV_k V_k^T}_{\{2,F\}}},
\]
which naturally captures how much worse $\hV_k$ is at approximating $A$ than $V_k$. Notably, $\norm{A-A\hV_k\hV_k^T}_{\{2,F\}}$ provides a suitable target when there is no appreciable gap between $\sigma_k$ and $\sigma_{k+1}$ and approximation of the dominant subspace is ill-posed~\cite{drineas2019low}. This means (approximate) \mGKS\ is well suited to situations where $A$ is only available via matrix-vector products.\footnote{It also mirrors results in~\cite{osinsky2025close,cortinovis2026adaptive}, where the authors also analyze performance when an approximation of $V_k$ is used.} In this setting there are many ways to compute $\hV_k$ but there is no natural way to compute a strong rank-revealing QR factorization of $A$ directly.

Column subset selection is simultaneously a classical problem in numerical linear algebra, and one that merits, and receives, continued attention. \Cref{sec:discussion} highlights several ways in which the results herein can be extended (e.g., to two sided interpolatory factorizations), improved (e.g., to try and close the theoretical gap with volume-sampling), and provide natural improvements in application areas.

\subsection{Related work}
In the Frobenius norm, Cortinovis and Kressner~\cite{cortinovis2026adaptive} provide a randomized algorithm that, in expectation, achieves the same upper bound as Osinsky without residual computations. Leveraging the fact that the algorithm in~\cite{cortinovis2026adaptive} is equivalent to $k$-volume sampling of a projection matrix, Epperly~\cite{epperly2025adaptive} gives an independent proof of the same approximation bounds and provides more efficient sampling methods. The origins of these methods are in the (k-)volume sampling literature~\cite{deshpande_rademacher_projectiveclustering,deshpande2006adaptive,deshpande_rademacher_volumesampling,guruswami2012optimal,belhadji2020determinantal}, which we will not survey here.

This manuscript provides an improved understanding of how to leverage randomness within column selection algorithms. At one extreme sampling strategies can be highly efficient (e.g., uniform sampling), albeit with weak error guarantees. Conversely, computing a (full) strong rank-revealing factorization or performing ``exact'' k-volume sampling is quite expensive. A natural middle ground is to either approximate $V_k$ or ``sketch'' $A$ as $GA$ using some appropriate $G$ and then apply a column selection algorithm to $V_k^T$ or $GA$ (e.g., a simple pivoted QR). Recent work~\cite{dong2023simpler,dong2025robust} elaborates on this tradeoff. 

At least in the two norm, the most natural blend of theoretical guarantees and practical performance is to leverage randomness to approximate $V_k$ and then follow that up with (randomized) Bischof-Stewart pivoting applied to $V_k^T$. To get strong theoretical guarantees from $GA$ typically requires appealing to strong rank-revealing factorizations~\cite{martinsson_rokhlin_tygert_randid,grigori2025randomizedstrongrankrevealingqr}, which could be inefficient in practice. Random sampling schemes with strong error guarantees such as those highlighted earlier~\cite{cortinovis2026adaptive,epperly2025adaptive} provide no better guarantees than \mGKS\ in the two norm (regardless of whether true or approximate singular vectors are used). Moreover, their error guarantees are probabilistic (i.e., in expectation)---those of \mGKS\ are always satisfied, even when our randomized pivoting scheme is used. 

\section{Background}
\label{sec:background}

\subsection{Matrix Factorizations}
We will primarily leverage two key matrix factorizations: (partial) pivoted QR factorizations and the singular value decomposition (SVD). Given a permutation matrix $\Pi\in\Rnn$ and any $k\leq\mmn$ we let 
\begin{equation}
\label{eq:pivotedQR}
A\Pi = \begin{bmatrix}Q_1 & Q_2\end{bmatrix}\begin{bmatrix}R_{11} & R_{12} \\ & M\end{bmatrix}
\end{equation}
denote a partially computed (full) QR factorization of $A\Pi$ where $Q_1\in\Rmk$ and $Q_2\in\R^{m\times (m-k)}$ have orthonormal columns, $R_{11}\in\Rkk$ is upper triangular, $M\in\R^{(m-k) \times (n-k)}$, and $R_{12}\in\R^{k\times (n-k)}$. Observe that, since we place no structural restrictions on $M,$ we have defined a partial reduction to upper triangular form for $k<\mmn$; for this reason we use $M$ rather than $R_{22}$. Lastly, it will sometimes be convenient to split the permutation $\Pi$ as 
\[
	\Pi = \begin{bmatrix}\Pi_1 & \Pi_2\end{bmatrix},
\]
where $\Pi_1$ is the first $k$ columns of $\Pi$ and $\Pi_2$ the remaining $n-k$. When $k=\mmn$ we call this a pivoted QR factorization of $A$.

For the SVD, again given any $k\leq\mmn$ we denote
\begin{equation}
\label{eq:svd}
	A = U_k\Sigma_k V_k^T + U_\perp \Sigma_\perp V_\perp^T,
\end{equation}
where $U_k\in\Rmk,$ $V_k\in\Rnk,$ and $\Sigma_k\in\Rkk$ are used to denote the leading $k$ left singular vectors, right singular vectors, and singular values, and $U_\perp\in\R^{m\times (m-k)},$ $V_\perp\in\R^{n\times (n-k)},$ $\Sigma_\perp\in\R^{(m-k)\times (n-k)}$ denote the remaining singular values and vectors. As usual, $U_k,V_k,U_\perp,$ and $V_\perp$ have orthonormal columns and $\Sigma_k$ and $\Sigma_\perp$ are diagonal with non-negative entries denoted $\sigma_1,\ldots,\sigma_{\mmn}$.

\begin{remark}
In either of the preceding two factorizations, if $k=\mmn$ then any matrices with 0 dimension ``disappear.'' 
\end{remark}

\subsection{(Strong) rank-revealing QR factorizations}
It follows from~\cref{eq:pivotedQR} that $\norm{A-P_\C A} = \norm{M}$ for any unitarily invariant norm and $\smin(A(:,\C)) = \smin(R_{11})$. Therefore, the questions we asked in the introduction are intrinsically tied to pivoted QR factorizations. Accordingly, there has been significant work on algorithms that compute $\Pi$ to ensure that the QR factorization~\cref{eq:pivotedQR} has certain desirable properties. Of particular note is the development of (strong) rank-revealing QR factorizations (see, e.g.,~\cite{chandrasekaran_1994,hong_pan} for a discussion of rank-revealing and the refinement to strong rank-revealing in~\cite{gu_srrqr}).

For a factorization to be strong rank-revealing it must satisfy stringent criteria. Rather than just requesting that $\|M\|_2\approx \sigma_{k+1}$ and $\smin(R_{11}) \approx \sigma_k,$ the singular values of $R_{11}$ must well approximate $\sigma_1,\ldots,\sigma_k,$ the singular values of $M$ must well approximate $\sigma_{k+1},\ldots,\sigma_{\mmn},$ and the interpolation coefficients $(R_{11}^{-1}R_{12})_{ij}$ must have controlled size. Precise statements may be found in~\cite{gu_srrqr}; recently,~\cite{damle2025estimating} provided necessary and sufficient conditions for $\Pi$ to induce a strong rank-revealing factorization.\footnote{Once $\Pi$ has been fixed, $R_{11}$ is essentially unique; $M$ has more flexibility since it is not structurally constrained, but its singular values are determined by $\Pi$.}

This manuscript is primarily concerned with the reduced criteria in~\cref{eq:lrapprox} and~\cref{eq:smin}. Nevertheless, we will show that the modified pivoting strategy we discuss next computes a strong rank-revealing factorization in certain settings. Moreover, strong rank-revealing factorizations give good bounds for those problems and provide a natural point of comparison. From a computational standpoint, a key consideration is that algorithms for strong rank-revealing QR typically have to allow for ``swaps.'' In other words, for some fixed $k$ a candidate $\Pi_1$ is computed (e.g., via the standard pivoted QR algorithm of Golub and Businger~\cite{golub1965numerical,businger1965linear}) and then ``fixed'' by computing swaps between $\Pi_1$ and $\Pi_2$ till the desired bounds are achieved. Such a scheme is less efficient (albeit marginally so if the number of swaps is small) and a key consideration is how to bound the number of swaps---doing so necessitates slightly relaxing the most stringent criteria that could be asked for. For \mGKS\ we need no such relaxation.

\subsection{Stewart's modified pivoting criterion}
\Cref{secA:BSpivoting} contains a comprehensive description of the Bischof-Stewart pivoting scheme along with numerical experiments to illustrate practical performance. Here we briefly summarize the relevant ideas. Given a partially computed QR factorization as in~\cref{eq:pivotedQR} the core idea is to pick the next pivot (i.e., column $k+1$ of $\Pi$) by choosing the $j$ that minimizes
\begin{equation}
\label{eq:BSPivotBrief}
\frac{1+\|R_{11}^{-1}R_{12}e_j\|_2^2}{\|Me_j\|_2^2}.
\end{equation}
By convention, at the first step (i.e., $k=0$ to $k=1$) we simply choose the $j$ that maximizes $\|Me_j\|_2^2$ (i.e., the largest column of $A$). The standard Golub-Businger pivoting strategy simply sticks to this selection criterion at each step, always choosing $j$ to maximize $\|Me_j\|_2^2$.  

The motivation behind minimizing \cref{eq:BSPivotBrief} is to minimize the growth in $\|R_{11}^{-1}\|_F$. To see this, consider a partially computed QR factorization of $A\in\Rmn$ after $k \leq \mmn$ steps of the form~\cref{eq:pivotedQR}. If we let $\hat{R}_{11}$ denote the upper left $(k+1) \times (k+1)$ block of~\cref{eq:pivotedQR} after column $j$ of the block matrix 
\[
\begin{bmatrix} R_{12} \\ M\end{bmatrix}
\]
is chosen as a pivot (i.e., columns $k+1$ and $k+j$ of $A\Pi$ are swapped) and the factorization is progressed by one column (i.e., one additional Householder reflector is applied after the swap to reduce column $k+1$ of $A\Pi$ to upper triangular form), then
\begin{equation}
\label{eq:RinvF_update}
\|\hat{R}_{11}^{-1}\|_F^2 = \|R_{11}^{-1}\|_F^2 + \frac{1+\|R_{11}^{-1}R_{12}e_j\|_2^2}{\|Me_j\|_2^2}.
\end{equation}
This is a simple computation that follows from the fact that
\[
	\hat{R}_{11}^{-1} = \begin{bmatrix} R_{11}^{-1} & -\frac{R_{11}^{-1}R_{12}e_j}{\|Me_j\|_2} \\ & \frac{1}{\|Me_j\|_2} \end{bmatrix}.
\]

This small change has, in certain circumstances, profound impact on the theoretical performance. \Cref{subA:illustrative_performance} shows that such a scheme can be nearly as computationally efficient as the Golub-Businger scheme.\footnote{It achieves the same asymptotic complexity, but the floating point operation count is slightly higher.} Moreover, its form enables a fast randomized variant with strong theoretical guarantees. A natural question, therefore, is why the modified pivoting strategy didn't ``take off.'' While impossible to provide a definitive answer, one possible reason is that for certain pathological problems such as the Kahan matrix~\cite{kahan1966numerical} the Bischof and Stewart scheme also fails. Its value is only theoretically evident when applied to matrices with orthonormal rows---as in the GKS framework.

\section{Bischof-Stewart is strong rank revealing for matrices with orthonormal rows}
\label{sec:BStheory}

Given a matrix with orthonormal rows \cref{thm:BSpivot} characterizes how well the pivoting scheme of Bischof and Stewart controls singular values of the selected subset. Notably, Osinsky~\cite{osinsky2025close} proved the $i=m$ case in the analysis of that paper's main algorithmic contribution (which is distinct from \mGKS), and Avron and Boutsidis~\cite{avron2013faster} provide similar guarantees albeit with an algorithm that is quadratic in the number of columns (though that cost can be reduced~\cite{kozyrev2026subset}). \Cref{cor:osinsky} highlights the connection to Osinsky's work and that this algorithm achieves the quality of a maximum-volume submatrix~\cite{goreinov1997theory}, albeit without any guarantees of having maximal volume. For \cref{thm:BSpivot} we provide a direct proof based on \cref{eq:RinvF_update}, though aspects of the strategy mirror Osinsky's work.
\begin{theorem}
\label{thm:BSpivot}
Given $Z\in\Rnm$ with $Z^TZ=I,$ if Bischof-Stewart pivoting is applied to compute a pivoted QR factorization of $Z^T$ then the $m$ columns selected, denoted $\C,$ satisfy
\[
	\sigma_i(Z(\C,\mc))\geq \frac{1}{\sqrt{1+\frac{i(n-m)}{m-i+1}}}
\]
for $i=1,\ldots, m$.
\end{theorem}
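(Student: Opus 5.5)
The plan is to track $f_j := \|R_{11}^{-1}\|_F^2$ after $j$ steps of Bischof-Stewart pivoting on $A=Z^T\in\R^{m\times n}$, using the update \cref{eq:RinvF_update}. First I will show by induction that
\[
f_j \le j + \frac{j(n-m)}{m-j+1}, \qquad j=1,\ldots,m.
\]
Then I will convert this Frobenius bound into the bound on $\sigma_i$ by interlacing. The orthonormality of the rows of $Z^T$ enters through two trace identities, which make the sums of the numerators and denominators of \cref{eq:BSPivotBrief} over the unselected columns explicit.

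For the averaging step, fix $j<m$ and write $Z^T\Pi=[Q_1\ Q_2]\left[\begin{smallmatrix}R_{11}&R_{12}\\ &M\end{smallmatrix}\right]$ with $R_{11}\in\R^{j\times j}$. The pivot minimizes a ratio $a_c/b_c$ with $b_c>0$ (I assume full rank is maintained, which holds because the minimizer has finite ratio). Hence the chosen increment is at most $\sum_c a_c/\sum_c b_c$.

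For the denominators, $\sum_c\|Me_c\|_2^2=\|M\|_F^2=\|(I-Q_1Q_1^T)Z^T\|_F^2=\operatorname{tr}(I-Q_1Q_1^T)=m-j$. Here I use $Z^TZ=I$, so that $\|XZ^T\|_F=\|X\|_F$.

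For the numerators, $\sum_c(1+\|R_{11}^{-1}R_{12}e_c\|_2^2)=(n-j)+\|R_{11}^{-1}R_{12}\|_F^2$. Since $R_{11}^{-1}[R_{11}\ R_{12}]=R_{11}^{-1}Q_1^TZ^T\Pi$, orthonormality gives
\[
j+\|R_{11}^{-1}R_{12}\|_F^2=\|R_{11}^{-1}Q_1^T\|_F^2=f_j .
\]

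Combining these yields the recurrence
\[
f_{j+1}\le f_j+\frac{n-2j+f_j}{m-j}=f_j\,\frac{m-j+1}{m-j}+\frac{n-2j}{m-j}.
\]
The base case is $f_1=n/m$: the first pivot maximizes the column norm, and the squared column norms sum to $m$. Substituting the inductive hypothesis, the numerator becomes $j(m-j+1)+j(n-m)+n-2j=(j+1)(n-j)$. This equals the numerator of the claimed bound at $j+1$ over the common denominator $m-j$, since $(j+1)(m-j)+(j+1)(n-m)=(j+1)(n-j)$. This closes the induction. I expect this bookkeeping, namely finding the exact closed form that the recurrence preserves, to be the main obstacle. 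The identities themselves are routine once one sees that $Z^T$ has orthonormal rows.

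Finally, fix $i$ and let $B_i=Z^T(:,\C_i)$ be the first $i$ selected columns. Its singular values equal those of the $R_{11}$ at step $i$. Deleting columns can only decrease singular values, so $\sigma_i(Z(\C,\mc))\ge\sigma_i(B_i)=\smin(B_i)$. Every singular value of $B_i$ is at most $\|Z\|_2=1$, so $f_i=\sum_{l\le i}\sigma_l(B_i)^{-2}\ge (i-1)+\smin(B_i)^{-2}$. Therefore
\[
\smin(B_i)^{-2}\le f_i-(i-1)\le 1+\frac{i(n-m)}{m-i+1},
\]
which is the claim.
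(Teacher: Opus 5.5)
Your proof is correct and follows the paper's argument essentially line for line. You use the same averaged-ratio recurrence $f_{j+1}\le \frac{m-j+1}{m-j}f_j+\frac{n-2j}{m-j}$, and the same closed form (your $j+\frac{j(n-m)}{m-j+1}$ equals the paper's $\frac{j(n-j+1)}{m-j+1}$). You also convert to $\sigma_i$ the same way, via $f_i-(i-1)$ followed by interlacing. One small correction: the base case should read $f_1\le n/m$ rather than $f_1=n/m$, since the largest squared column norm is only guaranteed to be at least $m/n$.
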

\begin{proof}
We prove this bound progressively by tracking $\|R_{11}^{-1}\|_F^2$ while computing a pivoted QR factorization of $Z^T$ using Bischof-Stewart pivoting (following the notation of \cref{eq:pivotedQR} at each step). We will convert to a spectral norm bound later. We then use the fact that the singular values of $Z(\C,:)$ are the same as those of $R_{11}$ at the end of the factorization. 

To simplify notation, let $f_i = \|R_{11}^{-1}\|_F^2$ at step $i$. Because $Z^T$ has orthonormal rows there is at least one column with squared norm greater than or equal to $m/n$; since the pivoting scheme picks the largest column, we have that $f_1 \leq n/m$.

Using \cref{eq:RinvF_update} we can build the recurrence 
\[
f_{i+1} = f_i + \min_{j=1,\ldots,n-i} \frac{1+\|R_{11}^{-1}R_{12}e_j\|_2^2}{\|Me_j\|_2^2}.
\]
The entries in both the numerator and denominator of the minimization objective are nonnegative, and $\|Me_j\|_2^2 > 0$ for at least one $j$. So, we can bound the minimizer by the ratio of the sums as
\[
\min_{j=1,\ldots,n-i} \frac{1+\|R_{11}^{-1}R_{12}e_j\|_2^2}{\|Me_j\|_2^2} \leq \frac{(n-i) + \|R_{11}^{-1}R_{12}\|_F^2}{\|M\|_F^2}.	
\]
We now observe that $M\in\R^{(m-i)\times (n-i)}$ and $\begin{bmatrix}R_{11} & R_{12}\end{bmatrix}\in\R^{i\times n}$ have orthonormal rows. From this it follows that $\|M\|_F^2 = m-i$ and $\|R_{11}^{-1}R_{12}\|_F^2 = \|R_{11}^{-1}\|_F^2-i$. This yields the recurrence 
\[
	f_{i+1} \leq f_i + \frac{(n-i) + f_i - i}{m-i}.
\]
Grouping terms and forcing a common denominator we find that
\[
	f_{i+1} \leq \frac{m-i+1}{m-i}f_i + \frac{n-2i}{m-i}.
\]
We assert that $f_i \leq \frac{i(n-i+1)}{m-i+1}$ for $i=1,\ldots,m$ and perform a quick inductive proof; from above $f_1\leq n/m$. Running forward the inductive hypothesis yields
\begin{align*}
f_{i+1} &\leq \left(\frac{m-i+1}{m-i}\right)\frac{i(n-i+1)}{m-i+1} + \frac{n-2i}{m-i}\\
&\leq \frac{n+(n-1)i-i^2}{m-(i+1)+1}\\
&\leq \frac{(i+1)(n-(i+1)+1)}{m-(i+1)+1}.
\end{align*} 

To convert to a bound on $s_i = \|R_{11}^{-1}\|_2^2$ at step $i$ we use the fact that at each step $R_{11}$ is an $i\times i$ submatrix of a matrix with $i$ orthonormal rows---so, all of its singular values must be $\leq 1$. Therefore, $s_i \leq f_i - (i-1)$ yielding the bound
\begin{align}
s_i &\leq f_i - (i-1) \nonumber\\
&\leq \frac{i(n-i+1)}{m-i+1} - (i-1) \nonumber\\
&\leq \frac{i(n+1)-i^2+i^2-i(m+1)+(m-i+1)}{m-i+1} \nonumber\\
&\leq 1 + \frac{i(n-m)}{m-i+1}. \label{eqn:sbound}
\end{align}

Finally, we observe that once the factorization is complete, $s_i$ bounds the minimal singular value of the upper left $i\times i$ block of the $m\times m$ upper triangular matrix $R_{11}$ whose singular values match those of $Z(\C,:)$. The result then immediately follows from singular value interlacing. 
\end{proof}
\begin{corollary}[Theorem 5 in~\cite{osinsky2025close}]
\label{cor:osinsky}
Given $Z\in\Rnm$ with $Z^TZ=I,$ if Bischof-Stewart pivoting is applied to compute a pivoted QR factorization of $Z^T$ then the $m$ columns selected, denoted $\C,$ satisfy
\[
	\|Z(\C,\mc)^{-1}\|_2 \leq \sqrt{1+m(n-m)}
\]
and
\[
	\|Z(\C,\mc)^{-1}\|_F \leq \sqrt{m(n-m+1)}.
\]
\end{corollary}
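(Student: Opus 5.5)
The corollary follows by specializing the quantities tracked in the proof of \cref{thm:BSpivot} to the final step $i=m$. No new argument is needed. After $m$ steps the factorization of $Z^T$ is complete. At that point $R_{11}\in\R^{m\times m}$ is the triangular factor of $Z(\C,\mc)^T$, that is, $Z(\C,\mc)^T\Pi_1 = Q_1R_{11}$ with $Q_1$ orthogonal, up to the column ordering. Hence $Z(\C,\mc)$ and $R_{11}$ have the same singular values. Consequently $\|Z(\C,\mc)^{-1}\|_2=\|R_{11}^{-1}\|_2$ and $\|Z(\C,\mc)^{-1}\|_F=\|R_{11}^{-1}\|_F$, since both norms are unitarily invariant and transposition does not change them.

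For the spectral norm I will use the bound $s_i\le 1+\frac{i(n-m)}{m-i+1}$ from \cref{eqn:sbound} at $i=m$. The denominator becomes $1$, so $s_m\le 1+m(n-m)$. Taking square roots gives the first claim. Equivalently, one can set $i=m$ in the statement of \cref{thm:BSpivot}, which gives $\smin(Z(\C,\mc))\ge (1+m(n-m))^{-1/2}$, and then invert.

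For the Frobenius norm I will use the inductive bound $f_i\le \frac{i(n-i+1)}{m-i+1}$ established in the proof at $i=m$. This gives $f_m\le m(n-m+1)$. Since $f_m=\|R_{11}^{-1}\|_F^2=\|Z(\C,\mc)^{-1}\|_F^2$, taking square roots gives the second claim.

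The only step needing care is confirming that the intermediate bounds on $f_i$ and $s_i$ are valid at the terminal step $i=m$. The recurrence for $f_{i+1}$ divides by $m-i$, so it is only used for $i\le m-1$, and it therefore produces $f_m$ legitimately. Beyond that check the corollary is a direct substitution, so I expect no real obstacle.
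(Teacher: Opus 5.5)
Your proposal is correct and matches the paper's (implicit) argument. The corollary is the $i=m$ case of the bounds $f_i \le \frac{i(n-i+1)}{m-i+1}$ and $s_i \le 1+\frac{i(n-m)}{m-i+1}$ from the proof of \cref{thm:BSpivot}, combined with the fact that $R_{11}$ and $Z(\C,\mc)$ have the same singular values.
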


\Cref{thm:BSpivot} shows that when applied to matrices with orthonormal rows Bischof-Stewart pivoting computes a strong rank-revealing QR factorization (per~\cite{gu_srrqr} and~\cite{damle2025estimating}) for $i = 1,\ldots,m$. To formalize this statement we extract from the proof of \cref{thm:BSpivot} that at step $i$ we have $\|R_{11}^{-1}R_{12}\|_{\max} \leq \sqrt{1 + \frac{i(n-m)}{m-i+1}},$ and observe that when $i<m$ $\sigma_{\ell}(M) = \sigma_{\ell+i}(Z^T)$. 

In fact, in the setting of orthonormal rows \cref{thm:BSpivot} shows that Bischof-Stewart pivoting is preferable to the strong rank-revealing algorithm of Gu and Eisenstat~\cite{gu_srrqr}. Bischof-Stewart yields stronger control on $\sigma_i(R_{11})$ for $i<m,$ and to get as strong of a result Algorithm 4 of~\cite{gu_srrqr} at $i=m$ would require setting $f=1$ (at which point the algorithm's runtime may not be polynomial in the size of the matrix). 
\begin{remark}
It is possible that this gap is an artifact of the analysis from~\cite{gu_srrqr} where the special case of orthonormal rows is not considered. Nevertheless, the potential for multiple swaps at step $k$ would significantly complicate per singular value analysis. Practically, the pivoted QR of Golub and Businger~\cite{businger1965linear} often suffices.
\end{remark}

\section{Theoretical guarantees for \mGKS}
\label{sec:mGKStheory}

We first give a brief description of the \mGKS\ framework and then provide a theoretical treatment. To accomplish this we provide a generic result that highlights how well columns of $A$ selected from an arbitrary matrix with orthonormal rows can be used to approximate $A$. We then incorporate the results from \cref{sec:BStheory} to provide end to end guarantees for \mGKS.

\subsection{The \mGKS\ framework}
We briefly summarize \mGKS\ as a framework for selecting columns of a matrix in \cref{alg:mGKS}. It is deliberately written to return $\C \subset [n]$ of cardinality $k$ representing the selected columns of $A$. These columns can then be used as needed, e.g., to compute a low-rank approximation.  

\begin{algorithm}
\caption{\mGKS}
\label{alg:mGKS}
\begin{algorithmic}[1]
\Require matrix $A \in \Rmn$ and target number of columns $k \leq \rank{(A)}$.
\State Compute $\hV_k,$ an estimate of $V_k$ with $\hV_k^T\hV_k=I$ \Comment{Estimate the leading $k$ right singular vectors of $A$}
\State Compute the column pivoted QR factorization
\[
	\hV_k^T\begin{bmatrix} \Pi_1 & \Pi_2\end{bmatrix} = Q_1 \begin{bmatrix} R_{11} & R_{12}\end{bmatrix}.
\]
using the Bischof-Stewart pivoting scheme. \Comment{See \cref{secA:BSpivoting} for details}
\State Let $\C$ denote the $k$ rows of $\Pi_1$ that contain non-zero entries.
\State \Return $\C$
\end{algorithmic}
\end{algorithm}

The rationale for framing \mGKS\ as a framework is that there are lots of choices for how to compute $\hV_k$. The details of which scheme is used are inconsequential for the theoretical bounds, all that matters is how well $\hV_k$ can be used to approximate $A$ (relative to $V_k$). Natural choices are to use a randomized SVD~\cite{halko_finding_structure_with_randomness} (as in~\cite{armstrong2025structure}) or a sketch of $A$ whose rows are then orthogonalized (as in~\cite{martinsson_rokhlin_tygert_randid}); other choices such as Krylov subspace based methods are also good.

\subsection{Approximation using an arbitrary subspace}

Given an arbitrary matrix $Z\in\Rnk$ (here, one should think of it as an approximation of $V_k$ as in \cref{alg:mGKS}), how well can we approximate $A$ using columns selected from $Z^T$? We address this question in two stages. First, in \cref{thm:Z2F} we prove a generalization of Theorem 9.1 in~\cite{halko_finding_structure_with_randomness} that allows us to incorporate the overlap between a sampling matrix and an arbitrary orthogonal matrix (as opposed to just the right singular vectors); this result is of independent interest. We then specialize the result in \cref{cor:Z2,cor:ZF} to sampling with a subset of the identity matrix (i.e., column selection)---this recovers lemma 4.1 from~\cite{sorensen_embree_deim} in the two norm case and yields new bounds in the Frobenius norm.\footnote{The proof of \cref{thm:Z2F} is motivated by observations from~\cite{sorensen_embree_deim}.} 

\begin{theorem}
\label{thm:Z2F}
Consider $A\in\Rmn,$ an arbitrary $Z\in\Rnk$ with $k<\mmn$ such that $Z^TZ = I,$ any $Z_{\perp} \in \R^{n\times (n-k)}$ such that $\begin{bmatrix} Z & Z_{\perp}\end{bmatrix}$ is orthogonal, and a sampling matrix $\Omega\in\R^{n\times \ell}$ with $\ell \leq n$. If $P_Y$ is the orthogonal projector onto the range of $Y=A\Omega$ then if $W_1 = Z^T\Omega$ has full row rank
\[
	\|A-P_Y A\|_{\{2,F\}}^2 \leq \|A(I-ZZ^T)\|_{\{2,F\}}^2 + \|A(I-ZZ^T)\|_{2}^2\|W_2W_1^{\dagger}\|_{\{2,F\}}^2,
\]
where $W_2 = Z_{\perp}^T\Omega$.
\end{theorem}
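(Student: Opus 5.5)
We follow the proof of Theorem 9.1 in~\cite{halko_finding_structure_with_randomness}, with the right singular vectors replaced by the arbitrary orthonormal pair $\begin{bmatrix} Z & Z_\perp\end{bmatrix}$. The key idea is to exhibit a specific matrix in the range of $Y$ and bound the residual against it. The projection $P_Y A$ is the best approximation to $A$ from the range of $Y$ in both norms. So for any $X$ we have $\|A-P_YA\|\le\|A - YX\|$. We choose
\[
	X = W_1^{\dagger}Z^T .
\]
Since $W_1$ has full row rank, $W_1W_1^\dagger = I_k$.

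First write $\Omega = ZW_1 + Z_\perp W_2$. Then
\[
	YX = A\Omega W_1^\dagger Z^T = AZZ^T + AZ_\perp W_2 W_1^\dagger Z^T .
\]
Setting $E = A - YX$ gives
\[
	E = A Z_\perp Z_\perp^T - AZ_\perp W_2W_1^\dagger Z^T = AZ_\perp\bigl(Z_\perp^T - W_2W_1^\dagger Z^T\bigr).
\]
Right-multiplying by the orthogonal matrix $\begin{bmatrix} Z_\perp & Z\end{bmatrix}$ leaves both norms unchanged. This yields
\[
	\|E\| = \bigl\|\begin{bmatrix} AZ_\perp & -AZ_\perp W_2W_1^\dagger\end{bmatrix}\bigr\| .
\]

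In the Frobenius norm, the squared norm splits across the two blocks. It equals $\|AZ_\perp\|_F^2 + \|AZ_\perp W_2W_1^\dagger\|_F^2$. We then use $\|AZ_\perp\|_F = \|A(I-ZZ^T)\|_F$ and $\|BC\|_F\le\|B\|_2\|C\|_F$ to obtain the claim.

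The spectral norm is the main obstacle, because a block row does not split additively in the two norm. We have $\|[B_1\ B_2]\|_2^2 = \|B_1B_1^T + B_2B_2^T\|_2 \le \|B_1\|_2^2+\|B_2\|_2^2$. Here $B_1 = AZ_\perp$ and $B_2 = -AZ_\perp W_2W_1^\dagger$. Applying this inequality, then submultiplicativity, and then $\|AZ_\perp\|_2=\|A(I-ZZ^T)\|_2$ gives the stated two-norm bound. Unlike the projector-based argument in~\cite{halko_finding_structure_with_randomness}, this route never needs a perturbation bound on $P_Y$. It relies only on the triangle-type inequality for $BB^T$ sums.

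Finally, we note that nothing requires $Z$ to be singular vectors. The argument uses only orthogonality of $\begin{bmatrix} Z & Z_\perp\end{bmatrix}$ and the full row rank of $W_1$, which gives the generalization.
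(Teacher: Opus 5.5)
Your proof is correct and follows essentially the same route as the paper. Both arguments compare $P_YA$ against the oblique approximation $YW_1^{\dagger}Z^T$, rotate the error by the orthogonal matrix built from $Z$ and $Z_\perp$ to obtain the block row $\begin{bmatrix} AZ_\perp & -AZ_\perp W_2W_1^{\dagger}\end{bmatrix}$ (up to column order), and split the norm across the blocks. Your explicit justification of the spectral-norm split via $\|B_1B_1^T + B_2B_2^T\|_2 \le \|B_1\|_2^2+\|B_2\|_2^2$ fills in a step the paper states without comment.
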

\begin{proof}
Since $P_Y A$ is the closest matrix to $A$ whose columns are in the range of $Y$ in both the two and Frobenius norm we have that
\[
	\|(I-P_Y)A\|_{\{2,F\}}^2 \leq \|A - YW_1^{\dagger}Z^T\|_{\{2,F\}}^2.
\]
I.e., we compare the best approximation to $A$ with range $Y$ to a constrained one constructed using the oblique projector $\Omega W_1^{\dagger}Z^T,$ which forces the approximation to have row space $Z$ and range contained in $Y$. 

Now, let $E = A - YW_1^{\dagger}Z^T$ and observe that 
\begin{align*}
	E\begin{bmatrix}Z & Z_{\perp} \end{bmatrix} &= \begin{bmatrix} AZ - YW_1^{\dagger} & AZ_{\perp}\end{bmatrix}\\
	&= \begin{bmatrix} A(Z - \Omega W_1^{\dagger}) & AZ_{\perp}\end{bmatrix}\\
	&= \begin{bmatrix} A(ZZ^T + Z_{\perp}Z_{\perp}^T)(Z - \Omega W_1^{\dagger}) & AZ_{\perp}\end{bmatrix}\\
	&= \begin{bmatrix} -AZ_{\perp} W_2 W_1^{\dagger} & AZ_{\perp}\end{bmatrix},
\end{align*}
where we have used that $ZZ^T(Z - \Omega W_1^{\dagger}) = 0$. We can now bound $\|A - YW_1^{\dagger}Z^T\|_{\{2,F\}}^2$ as
\[
	\|A - YW_1^{\dagger}Z^T\|_{\{2,F\}}^2 \leq \|AZ_{\perp}W_2W_1^{\dagger}Z^T\|_{\{2,F\}}^2 + \|AZ_{\perp}\|_{\{2,F\}}^2.
\]
We can factor out $\|AZ_{\perp}\|_2^2$ from the first term while retaining the bound; recognizing that $\|AZ_{\perp}\|_{\{2,F\}} = \|A(I-ZZ^T)\|_{\{2,F\}}$ completes the proof.
\end{proof}

We now refine \cref{thm:Z2F} when $\Omega$ is $k$ columns from the identity matrix, specifically those chosen by a pivoted QR factorization applied to $Z^T$ denoted 
\begin{equation}
\label{eqn:ZpQR}
Z^T\begin{bmatrix} \Pi_1 & \Pi_2\end{bmatrix} = Q_1 \begin{bmatrix} R_{11} & R_{12}\end{bmatrix}.
\end{equation}
We first consider the two norm and then the Frobenius norm.
\begin{corollary}[Lemma 4.1 in~\cite{sorensen_embree_deim}]
\label{cor:Z2}
Consider $A\in\Rmn$ and an arbitrary $Z\in\Rnk$ such that $Z^TZ = I$ with $k < \mmn$. Given a pivoted QR of $Z^T$ as in~\cref{eqn:ZpQR} with nonsingular $R_{11}$ we have that
\[
	\|A-P_{\C} A\|_2 \leq \frac{1}{\smin{(R_{11})}}\|A(I-ZZ^T)\|_2,
\]
where $P_\C$ is the orthogonal projector onto $A\Pi_1$.
\end{corollary}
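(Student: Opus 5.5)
We specialize \cref{thm:Z2F} in the two norm with $\Omega = \Pi_1$, so that $Y = A\Pi_1$ and $P_Y = P_\C$. The pivoted QR in \cref{eqn:ZpQR} gives
\[
W_1 = Z^T\Pi_1 = Q_1 R_{11},
\]
which is $k\times k$. Since $Q_1$ is orthogonal and $R_{11}$ is nonsingular, $W_1$ is invertible; in particular it has full row rank and $W_1^\dagger = W_1^{-1}$. \Cref{thm:Z2F} then yields
\[
\|A-P_\C A\|_2^2 \leq \|A(I-ZZ^T)\|_2^2\left(1 + \|W_2W_1^{-1}\|_2^2\right), \qquad W_2 = Z_\perp^T\Pi_1 .
\]

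The remaining task is to show $1+\|W_2W_1^{-1}\|_2^2 = 1/\smin(R_{11})^2$. Stack the two blocks:
\[
\begin{bmatrix} W_1 \\ W_2\end{bmatrix} = \begin{bmatrix} Z & Z_\perp\end{bmatrix}^T\Pi_1 .
\]
This matrix has orthonormal columns, because $\begin{bmatrix} Z & Z_\perp\end{bmatrix}$ is orthogonal and $\Pi_1$ consists of $k$ columns of a permutation. Hence $W_1^TW_1 + W_2^TW_2 = I$. Multiplying on the left by $W_1^{-T}$ and on the right by $W_1^{-1}$ gives
\[
I + (W_2W_1^{-1})^T(W_2W_1^{-1}) = (W_1W_1^T)^{-1}.
\]
Taking two norms, $1+\|W_2W_1^{-1}\|_2^2 = \|W_1^{-1}\|_2^2 = 1/\smin(W_1)^2$. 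Finally, $\smin(W_1) = \smin(Q_1R_{11}) = \smin(R_{11})$ by orthogonal invariance. Taking square roots completes the argument.

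The only mildly delicate point is the identity $1+\|W_2W_1^{-1}\|_2^2 = \|W_1^{-1}\|_2^2$, which is where the orthonormality of $Z$ enters; everything else is direct substitution into \cref{thm:Z2F}. We should also check that the hypothesis $k<\mmn$ from \cref{thm:Z2F} is met, which it is by assumption, and that $Z_\perp$ may be chosen arbitrarily, since the final bound does not depend on that choice.
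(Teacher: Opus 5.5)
Your proof is correct. It has the same skeleton as the paper's: specialize \cref{thm:Z2F} with $\Omega=\Pi_1$, then identify $1+\|W_2W_1^{-1}\|_2^2$ with $1/\smin(R_{11})^2$. Where you differ is in how that key identity is established.

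The paper invokes a CS decomposition of the full orthogonal matrix $\begin{bmatrix} Z & Z_\perp\end{bmatrix}^T\begin{bmatrix}\Pi_1 & \Pi_2\end{bmatrix}$ and passes through the interpolation coefficients. It first shows $\|W_2W_1^{-1}\|_2 = \|R_{11}^{-1}R_{12}\|_2$, then $\sqrt{1+\|R_{11}^{-1}R_{12}\|_2^2} = 1/\smin(R_{11})$. You use only that the single block column $\begin{bmatrix} W_1 \\ W_2\end{bmatrix}$ has orthonormal columns. The Gram identity $I + (W_2W_1^{-1})^T(W_2W_1^{-1}) = (W_1W_1^T)^{-1}$ then gives the result in one line, without touching $R_{12}$ or $\Pi_2$.

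Your route is more elementary and self-contained. The paper's route makes the link to $R_{11}^{-1}R_{12}$ explicit, which is the quantity \cref{cor:ZF} is phrased in and the quantity the Bischof--Stewart analysis controls. Your route does not lose that link. Taking traces instead of norms in your identity gives $\|W_2W_1^{-1}\|_F^2 = \|W_1^{-1}\|_F^2 - k = \|R_{11}^{-1}\|_F^2 - k$. This equals $\|R_{11}^{-1}R_{12}\|_F^2$ because $R_{11}R_{11}^T + R_{12}R_{12}^T = I$, which is exactly what \cref{cor:ZF} needs.
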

\begin{proof}
We start with \cref{thm:Z2F} and factor out $\|A(I-ZZ^T)\|_2^2$ and observe that $W_1 = Z^T\Pi_1$ and $W_2 = Z_{\perp}^T\Pi_1$. Using a CS decomposition of $\begin{bmatrix}Z & Z_{\perp}\end{bmatrix}^T\begin{bmatrix} \Pi_1 & \Pi_2\end{bmatrix}$ we can conclude that $\|W_2W_1^{-1}\|_2 = \|R_{11}^{-1}R_{12}\|_2$. The same CS decomposition also shows that $\sqrt{1+\|R_{11}^{-1}R_{12}\|_2^2} = \frac{1}{\smin{(R_{11})}}$ from which the result follows. 
\end{proof}

\begin{corollary}
\label{cor:ZF}
Consider $A\in\Rmn$ and an arbitrary $Z\in\Rnk$ such that $Z^TZ = I$ and $k < \mmn$. Given a pivoted QR of $Z^T$ as in~\cref{eqn:ZpQR} with nonsingular $R_{11}$ we have that
\[
	\|A-P_{\C} A\|_F \leq \|A(I-ZZ^T)\|_F\sqrt{1+\frac{1}{r_k}\|R_{11}^{-1}R_{12}\|_F^2},
\]
where $P_\C$ is the orthogonal projector onto $A\Pi_1$ and
\[
r_k = \frac{\|A(I-ZZ^T)\|_F^2}{\|A(I-ZZ^T)\|_2^2}.
\]
\end{corollary}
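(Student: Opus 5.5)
The plan is to apply \cref{thm:Z2F} in the Frobenius norm with $\Omega = \Pi_1$, so that $Y = A\Pi_1$ and $P_Y = P_\C$. This gives
\[
	\|A-P_\C A\|_F^2 \leq \|A(I-ZZ^T)\|_F^2 + \|A(I-ZZ^T)\|_2^2\,\|W_2W_1^{\dagger}\|_F^2,
\]
with $W_1 = Z^T\Pi_1$ and $W_2 = Z_\perp^T\Pi_1$. Since $R_{11}$ is nonsingular and $Z^T\Pi_1 = Q_1R_{11}$ with $Q_1$ orthogonal ($k\times k$), $W_1$ is square and invertible. Hence $W_1^\dagger = W_1^{-1}$, and the full row rank hypothesis of \cref{thm:Z2F} holds.

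Next I factor out $\|A(I-ZZ^T)\|_F^2$. Writing $\|A(I-ZZ^T)\|_2^2 = \|A(I-ZZ^T)\|_F^2 / r_k$ gives
\[
	\|A-P_\C A\|_F^2 \leq \|A(I-ZZ^T)\|_F^2\left(1 + \tfrac{1}{r_k}\|W_2W_1^{-1}\|_F^2\right).
\]
It then remains to show $\|W_2W_1^{-1}\|_F = \|R_{11}^{-1}R_{12}\|_F$.

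This identity is the only nontrivial step; the proof of \cref{cor:Z2} established only the spectral norm version, via the CS decomposition. I would use the CS decomposition again, arguing through singular values so that every unitarily invariant norm follows at once. Concretely, the nonzero singular values of $W_2W_1^{-1}$ are $\tan\theta_j$, where $\cos\theta_j$ are the singular values of $W_1$, which are also those of $R_{11}$. Likewise, $R_{11}^{-1}R_{12}$ satisfies $(R_{11}^{-1}R_{12})(R_{11}^{-1}R_{12})^T = R_{11}^{-1}(I - R_{11}R_{11}^T)R_{11}^{-T}$, using $R_{11}R_{11}^T + R_{12}R_{12}^T = I$ from the orthonormal rows of $Z^T\Pi$. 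Its eigenvalues are those of $(R_{11}^TR_{11})^{-1} - I$, namely $\sec^2\theta_j - 1 = \tan^2\theta_j$.

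The same computation applies to $W_2W_1^{-1}$, because $W_1^TW_1 + W_2^TW_2 = \Pi_1^T\Pi_1 = I$. This gives $W_1^{-T}W_2^TW_2W_1^{-1} = (W_1W_1^T)^{-1} - I$, whose eigenvalues again match since $W_1W_1^T$ and $R_{11}^TR_{11}$ share eigenvalues. The traces therefore agree, so the Frobenius norms are equal. Taking square roots completes the proof.
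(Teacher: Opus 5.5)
Your proof is correct and follows the paper's route. You apply \cref{thm:Z2F} with $\Omega=\Pi_1$, factor out $\|A(I-ZZ^T)\|_F^2$ using the definition of $r_k$, and identify $\|W_2W_1^{-1}\|_F$ with $\|R_{11}^{-1}R_{12}\|_F$. The only difference is that the paper simply cites the CS decomposition for that last identity, whereas you verify it directly from $W_1^TW_1+W_2^TW_2=I$ and $R_{11}R_{11}^T+R_{12}R_{12}^T=I$, which is a clean, self-contained substitute.
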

\begin{proof}
We start with \cref{thm:Z2F} and factor out $\|A(I-ZZ^T)\|_F^2$ and observe that $W_1 = Z^T\Pi_1$ and $W_2 = Z_{\perp}^T\Pi_1$. Using a CS decomposition of $\begin{bmatrix}Z & Z_{\perp}\end{bmatrix}^T\begin{bmatrix} \Pi_1 & \Pi_2\end{bmatrix}$ we can conclude that $\|W_2W_1^{-1}\|_F = \|R_{11}^{-1}R_{12}\|_F$; the result follows by observing that $r_k = \|A(I-ZZ^T)\|_F^2/\|A(I-ZZ^T)\|_2^2$. 
\end{proof}

\Cref{cor:Z2,cor:ZF} show how the overall approximation error can be decomposed into two parts: (1) a part dependent on the approximating subspace and (2) a part due to the column selection scheme applied to $Z^T$. Methods to build the approximating subspace $Z$ are well studied (and continue to receive attention), so we do not dwell on them here.

\subsection{Theoretical guarantees of \mGKS}

In \cref{thm:mGKS} we collect the above results to provide complete control of the approximation efficacy of \mGKS\ in terms of the quality of $\hV_k$. The theorem deliberately highlights the suboptimality factor of the approximation relative to the SVD. The special case $\hV_k = V_k$ simplifies the bounds to make them more directly comparable to certain prior work.

\begin{theorem}
\label{thm:mGKS}
Given $A\in\Rmn$ for any $k < \rank{A}$ \cref{alg:mGKS} computes a column subset $\C$ that satisfies
\[
	\|A-P_\C A\|_2 \leq \eta_2\sqrt{1+k(n-k)}\sigma_{k+1}(A)
\]
and
\[
	\|A-P_\C A\|_F \leq \eta_F\sqrt{1+\frac{k(n-k)}{r_k}}\left(\sum_{i=k+1}^{\mmn} \sigma_{i}(A)^2\right)^{1/2},
\]
where 
\begin{equation}
\label{eqn:consts}
r_k = \frac{\|(A(I-\hV_k \hV_k^T))\|_F^2}{\|(A(I-\hV_k \hV_k^T))\|_2^2}, \quad
\eta_2 = \frac{\|A(I-\hV_k\hV_k^T)\|_2}{\|A(I-V_kV_k^T)\|_2}, \quad 
\text{and} \quad 
\eta_F = \frac{\|A(I-\hV_k\hV_k^T)\|_F}{\|A(I-V_kV_k^T)\|_F}.
\end{equation}
\end{theorem}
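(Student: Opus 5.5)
The plan is to combine \cref{cor:Z2,cor:ZF}, applied with $Z=\hV_k$, with the bounds on $R_{11}$ and $R_{11}^{-1}R_{12}$ that \cref{thm:BSpivot} and \cref{cor:osinsky} give when Bischof-Stewart pivoting is applied to $\hV_k^T$. Since $\hV_k^T\hV_k=I$, \cref{thm:BSpivot} applies with $Z=\hV_k$, $m=k$ and $n$ columns. The resulting factorization \cref{eqn:ZpQR} then satisfies $1/\smin(R_{11})\leq\sqrt{1+k(n-k)}$, which is the $i=k$ case of the theorem (equivalently, \cref{cor:osinsky}). In particular $R_{11}$ is nonsingular, so both corollaries apply; this requires $k<\mmn$, which holds because $k<\rank A$.

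For the two norm, \cref{cor:Z2} gives
\[
\|A-P_\C A\|_2 \leq \sqrt{1+k(n-k)}\,\|A(I-\hV_k\hV_k^T)\|_2 .
\]
We then write $\|A(I-\hV_k\hV_k^T)\|_2=\eta_2\|A(I-V_kV_k^T)\|_2$ and use $\|A(I-V_kV_k^T)\|_2=\sigma_{k+1}(A)$ from the SVD \cref{eq:svd}. Note that $\eta_2$ is well defined because $\sigma_{k+1}(A)>0$ when $k<\rank A$.

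For the Frobenius norm, \cref{cor:ZF} reduces the task to bounding $\|R_{11}^{-1}R_{12}\|_F^2$ by $k(n-k)$. The proof of \cref{thm:BSpivot} records two facts: $\|R_{11}^{-1}R_{12}\|_F^2=\|R_{11}^{-1}\|_F^2-k$, which holds because $[R_{11}\ R_{12}]$ has orthonormal rows, and $f_k\leq k(n-k+1)$. Together these give
\[
\|R_{11}^{-1}R_{12}\|_F^2\leq k(n-k+1)-k=k(n-k).
\]
Substituting this into \cref{cor:ZF}, and using $\|A(I-\hV_k\hV_k^T)\|_F=\eta_F\big(\sum_{i>k}\sigma_i(A)^2\big)^{1/2}$, yields the stated bound with $r_k$ defined by $\hV_k$ as in \cref{eqn:consts}.

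I expect no real obstacle, since every step is bookkeeping. The one point needing care is the Frobenius bound. It should come from the final-step identity $\|R_{11}^{-1}\|_F^2-k$, not from the cruder estimate $\|R_{11}^{-1}R_{12}\|_F^2\leq (n-k)\|R_{11}^{-1}R_{12}\|_2^2\leq (n-k)k(n-k)$. I should also check that $k<\mmn$ and the nonsingularity of $R_{11}$ are covered, since the latter follows from \cref{thm:BSpivot}.
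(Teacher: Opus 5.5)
Your proposal is correct and follows the paper's proof. You insert the $i=k$ case of \cref{thm:BSpivot} into \cref{cor:Z2}, and the final-step bound $\|R_{11}^{-1}R_{12}\|_F^2 = f_k - k \leq k(n-k)$ from that theorem's proof into \cref{cor:ZF}; your extra checks (nonsingularity of $R_{11}$, $k<\min(m,n)$, and well-definedness of $\eta_2,\eta_F$) are details the paper leaves implicit.
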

\begin{proof}
The result follows immediately from inserting bounds from \cref{thm:BSpivot} (and, in the case of the Frobenius norm, its proof) into \cref{cor:Z2,cor:ZF}. 
\end{proof}

\begin{remark}
In the Frobenius norm bound of \cref{thm:mGKS} we see an interesting phenomenon: the upper bound improves as the stable rank of the residual $A(I-\hV_k \hV_k^T)$ grows. The cleanest way to interpret the result is to consider the case where $\hV_k = V_k$. In that case $r_k$ is the stable rank of $\Sigma_{\perp}$. If the singular values of the residual $\Sigma_{\perp}$ are completely flat and $n \leq m$ the Frobenius norm upper bound in \cref{thm:mGKS} becomes $\sqrt{1+k}\|\Sigma_{\perp}\|_F$ and matches the best possible result. While a somewhat contrived situation, we often attempt to choose $k$ at an ``elbow'' in the singular values such that the singular values of $\Sigma_{\perp}$ are somewhat flat (e.g., at a noise floor).
\end{remark}

While the aforementioned results are quite strong, it is worth highlighting that even when $\hV_k^T\neq V_k$ is used to select columns $\C$ the bound
\[
	\|A-P_\C A\|_2 \leq \frac{\sigma_{k+1}(A)}{\smin(V_k(\C,:))}
\] 
always holds. While $\smin(V_k(\C,:))$ cannot be easily computed without access to $V_k$ it is possible this upper bound is smaller than the one from \cref{cor:Z2}. This suggests that \cref{thm:mGKS} could be quite loose in certain situations. The key question to address is when does picking columns from $\hV_k^T$ ensure good columns of $V_k^T$ are selected.\footnote{In fact, while not expected, it is possible that in certain cases $\smin(V_k(\C,:)) > \smin(\hV_k(\C,:))$.} Addressing this question directly is quite challenging.

An important aspect of \cref{thm:mGKS} is that it is near optimal in some sense. Specifically, in a slightly limited setting ($m=n+1$)~\cite{boutsidis2014near} shows that there is a matrix $A$ such that any selection of $k$ columns yields an approximation at least a factor of $\sqrt{n/k}$ worse than optimal. Unlike in the Frobenius norm, it is impossible to have a bound independent of $n$.

\subsection{Basis quality}
We have thus far focused on approximation error. When true right singular vectors are used (i.e., $\hV_k = V_k$) it is immediate (see, e.g., \cite{golub_klema_stewart,hong_pan}) that 
\[
	\smin(A(\mc,\C))\geq \sigma_{k}(A)\smin{(V_k(\C,\mc))}.
\]
Using Bischof-Stewart pivoting then implies that 
\[
\smin(A(\mc,\C))\geq \sigma_{k}(A)/\sqrt{1+k(n-k)}. 
\]

It is natural to consider what happens when an approximation to $V_k$ is used. A brief computation shows that for an arbitrary $\hV_k$ used within \mGKS\ we have that
\[
	\smin(A(\mc,\C))\geq \smin{(A\hV_k)}\smin{(\hV_k(\C,\mc))} - \eta_2 \sigma_{k+1}(A),
\]
where $\eta_2$ is the same as in \cref{eqn:consts}. Specifically, using Weyl's inequality we have that
\begin{align*}
	\smin(A(\mc,\C)) &= \smin(A\hV_k(\hV_k(\C,\mc))^T + A(I-\hV_k\hV_k^T)I(\mc,\C)) \\
	&\geq \smin(A\hV_k)\smin(\hV_k(\C,\mc)) - \|A(I-\hV_k\hV_k^T)\|_2 \\
	&\geq \smin(A\hV_k)\smin(\hV_k(\C,\mc)) - \eta_2 \sigma_{k+1}(A).
\end{align*}
Notably, as $\hV_k$ fails to capture the dominant right singular subspace of $A$ the quality of the basis may degrade accordingly. In extreme cases this bound could even become vacuous, particularly if there is not a gap between $\sigma_k(A)$ and $\sigma_{k+1}(A)$.

\section{Randomized Bischof-Stewart pivoting for matrices with orthonormal rows} 
\label{sec:randBS}

A byproduct of the proof of \cref{thm:BSpivot} is a clear path to a highly efficient randomized variant of Bischof-Stewart pivoting that provides a set of columns with the same theoretical guarantees as the deterministic scheme when applied to matrices with orthonormal rows---not just in expectation, but every time. This method is best suited to the case where $m\ll n,$ the typical \mGKS regime.

\subsection{The algorithm}
The key observation from \cref{thm:BSpivot} is that to satisfy the overall bounds on growth of $\|R_{11}^{-1}\|_{\{2,F\}}$ (and the individual singular value bounds) we only need to ensure that at each step
\begin{equation}
\label{eqn:RandAccept}
f_{i+1} \leq \frac{m-i+1}{m-i}f_i + \frac{n-2i}{m-i},
\end{equation}
where, as before, $f_i = \|R_{11}^{-1}\|_F^2$ when $R_{11}$ is $i\times i$ and $f_0=0$. When $A$ has orthonormal rows we are guaranteed an index $j$ in \cref{eq:BSPivotBrief} that satisfies this criterion. However, there are likely many such indices and we need not worry about finding the minimizer. Any index that satisfies the growth bound suffices.
\begin{remark}
In practice, one might want to add a small ``slack'' factor to the right hand side of \cref{eqn:RandAccept} (a small multiple of machine precision) to account for rounding errors when the bound is saturated by even the best column. In addition, one may be concerned that \cref{eqn:RandAccept} could try and enforce a reduction in $f_i$ when $n<2m$ as the second term can become negative. This is disallowed by the lower bound $f_i\geq i$ when $A$ has orthonormal rows.
\end{remark}

The natural way to leverage this observation is to avoid searching over all columns for a suitable index $j$ to use as the next pivot and, crucially, to not maintain ``state'' for all the columns (the expensive part). In the context of \cref{eq:pivotedQR} this means that we only compute $R_{11}$ and a representation of $\Pi$. At any given step only certain columns of $R_{12}$ and $M$ are constructed. It is important to note that this means the randomized algorithm, by default, only returns $\Pi,$ $Q,$ and $R_{11}$. If desired, $Q^T$ can be used to compute $R_{12}$. 

Building off this observation, the randomized algorithm proceeds in the following manner: we randomly sample $k_b$ columns from $A,$ update them to the current state using saved Householder reflectors from earlier steps, start selecting columns from the block progressively using Bischof-Stewart pivoting until we are unable to find one that satisfies \cref{eqn:RandAccept}, sample a new set of $k_b$ columns and continue till we have selected $m$ total columns. This process is encoded in \cref{alg:randBSQR} and we entitle the algorithm \texttt{randBSQR}. By construction, the selected subset of columns satisfies the same bounds as the deterministic variant; this is encapsulated in \cref{thm:randBSpivot}. For simplicity the algorithm is written to always run for $m$ steps. 

\begin{remark}
\Cref{thm:BSpivot} guarantees that at each step there is a pivot choice that satisfies \cref{eqn:RandAccept}. However, we could instead simply enforce that $f_i \leq \frac{i(n-i+1)}{m-i+1}$ (which is also always possible) and still get the same bounds on $\|R_{11}^{-1}\|_{\{2,F\}}$ when the algorithm terminates. The practical effect is that ``slack'' in earlier steps (i.e., a gap between the chosen pivot and the necessary per-step upper bound) could be spent later to, potentially, find an allowable pivot faster. The potential downside is slight degradation in the quality of columns selected relative to enforcing \cref{eqn:RandAccept} at each step.
\end{remark}

\begin{algorithm}[t]
\caption{Randomized Bischof-Stewart column selection.}
\label{alg:randBSQR}
\begin{algorithmic}[1]
\Require $A\in\mathbb{R}^{m\times n}$ with $AA^T = I$; block size $k_b$ (default $k_b=m$); sampling
         weights $g_j$ with $g_j>0$ if $\|A(\mc,j)\|_2>0$ (default $g_j=\|A(\mc,j)\|_2^2/m$)
\Ensure Index set $\C$ with $\lvert \C\rvert=m$ satisfying \cref{thm:randBSpivot}; $Q$ and $R$ such that $A(\mc,\C) = QR$
\State Initialize $\C = \begin{bmatrix}\phantom{1} \end{bmatrix}$;\quad $\J = \begin{bmatrix}1,2,\ldots,n\end{bmatrix}$;\quad  $f = 0$;\quad $R = \begin{bmatrix}\phantom{1} \end{bmatrix}$;\quad $i=0$
\While{$\lvert\C\rvert < m$}
	\State $\hat{\J} = \J\setminus \C$;\quad ColSelect$\gets$ \FALSE
	\While{ColSelect is \FALSE}
	  \State $\B\gets\Call{SampleBlock}{\hat{\J},\min{(k_b,\lvert\hat{\J}\rvert)},g}$ 
	  \State $\mathcal{S} = \begin{bmatrix}1,2,\ldots,\lvert \B\rvert\end{bmatrix}$ \Comment{Track active columns}
	  \State $X =$ \Call{Update}{$A(\mc,\B),\{v_p\}_{p=1}^i$} \Comment{$X = Q^TA(\mc,\B)$ using Householder reflectors}  
	  \For{$j\in\mathcal{S}$}
	  \State $\rho_j^2 = \|X(i+1\mc m,j)\|_2^2$;\quad $Rw_j = X(1\mc i,j)$ \Comment{If $i=0$ $w_j$ is empty and $\|w_j\|_2^2 = 0$}
	  \EndFor
	  \If{$\max_j \rho_j = 0$} \Comment{Sampled a zero block; sample new columns}
		  \State $\hat{\J} = \hat{\J}\setminus \B$ 
	  \EndIf
	    \While{$\lvert\C\rvert < m$, $\mathcal{S} \neq \emptyset$, and $\max_{j\in\mathcal{S}} \rho_j > 0$}
	    \State Choose $\displaystyle \ell \in \argmin_{j\in\{\mathcal{S}\vert \rho_j > 0\}} \frac{1+\|w_j\|_2^2}{\rho_j^2}$ and let $c = \frac{1+\|w_\ell\|_2^2}{\rho_\ell^2}$
	    \If{$f+c > \frac{m-i+1}{m-i}f + \frac{n-2i}{m-i}$} \label{line:fcheck}\Comment{\cref{eqn:RandAccept} violated; sample new columns}
	    	\State $\hat{\J} = \hat{\J}\setminus \B$ 
	    	\State \textbf{break} 
	    \Else \Comment{\cref{eqn:RandAccept} satisfied; append column to $\C$}	 
	    	\State $\C = \begin{bmatrix}\C & \B(\ell)\end{bmatrix}$;\quad $i=i+1$    	
	    	\State $v_i =$ \Call{house}{$X(i\mc m,\ell)$} \Comment{$(I-2v_iv_i^T)X(i\mc m,\ell) = \pm \|X(i\mc m,\ell)\|_2e_1$}
	    	\State $X(i\mc m,\mc) = X(i\mc m,\mc) - 2v_i(v_i^TX(i\mc m,\mc))$
	    	\For{$j\in\mathcal{S}$}
	    	\State $\rho_j^2 = \rho_j^2 - X(i,j)^2$
			\State $\displaystyle w_j = \begin{bmatrix} w_j - \alpha_j w_{\ell}\\ \alpha_j\end{bmatrix},$ where $\alpha_j = X(i,j)/X(i,\ell)$
			\EndFor
			\State $\displaystyle R = \begin{bmatrix} R & X(1\mc i-1,\ell)\\ 0 & X(i,\ell)\end{bmatrix}$ 
			\State $\mathcal{S} = \mathcal{S}\setminus \ell$ \Comment{Remove selected column from the active set}
			\State f = f + c \label{line:fupdate}\Comment{Update $\|R_{11}^{-1}\|_F^2$}
	    	\State ColSelect$\gets$ \TRUE
	    \EndIf
  		\EndWhile
  	\EndWhile
\EndWhile
\State \Return $\C$, $R$, and $\{v_i\}_{i=1}^m$ representing $Q$
\end{algorithmic}
\end{algorithm}

\begin{theorem}
\label{thm:randBSpivot}
Given $Z\in\Rnm$ with $Z^TZ=I,$ if \cref{alg:randBSQR} is used to select $m$ columns of $Z^T$ then those columns selected, denoted $\C,$ satisfy
\[
	\sigma_i(Z(\C,\mc))\geq \frac{1}{\sqrt{1+\frac{i(n-m)}{m-i+1}}}
\]
for $i=1,\ldots, m$.
\end{theorem}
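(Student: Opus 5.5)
The argument reuses the proof of \cref{thm:BSpivot} almost verbatim. That proof used the pivoting rule only through the per-step recurrence \cref{eqn:RandAccept}; every later step (the induction giving $f_i \leq \frac{i(n-i+1)}{m-i+1}$, the conversion $s_i \leq f_i-(i-1)$, and interlacing) needs nothing else. So it suffices to show two things: that every column \cref{alg:randBSQR} accepts satisfies \cref{eqn:RandAccept} with $f_i = \|R_{11}^{-1}\|_F^2$, and that the algorithm actually terminates with $m$ columns, so that $R_{11}$ is a full $m\times m$ factor of $Z(\C,:)^T$.

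The first step is to check the bookkeeping. I would show by induction on $i$ that the quantities in the algorithm are the true ones. The returned $R$ and reflectors $\{v_p\}$ are exactly the partial Householder QR of $Z^T(:,\C)$ in the selected order. For each active column $j$, $\rho_j^2 = \|Me_j\|_2^2$ and $w_j = R_{11}^{-1}R_{12}e_j$. The vector update follows from the block inverse formula for $\hat R_{11}^{-1}$ displayed before \cref{eq:RinvF_update}: the new coefficient vector is $\left[\begin{smallmatrix} w_j-\alpha_j w_\ell\\ \alpha_j\end{smallmatrix}\right]$ with $\alpha_j = X(i,j)/X(i,\ell)$. For freshly sampled blocks, applying the stored reflectors (\textsc{Update}) yields $X = Q^T Z^T(:,\B)$, so $w_j$ is obtained by back substitution with $R$. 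Given this, \cref{eq:RinvF_update} shows that the update $f \leftarrow f + c$ on line~\ref{line:fupdate} keeps $f = \|R_{11}^{-1}\|_F^2$. The acceptance test on line~\ref{line:fcheck} is then literally \cref{eqn:RandAccept}, and the columns appended to $\C$ are the only ones that enter $R_{11}$. The recurrence holds at every step; for the first step ($f_0 = 0$) it gives $f_1 \leq n/m$ directly.

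The second step is termination, which I expect to be the main obstacle, since it is where randomness enters. At step $i$ (with $i<m$), the averaging argument in the proof of \cref{thm:BSpivot} shows that some unselected column $j^\ast$ satisfies $\frac{1+\|R_{11}^{-1}R_{12}e_{j^\ast}\|^2}{\|Me_{j^\ast}\|^2} \leq \frac{(n-i)+f_i-i}{m-i}$, which gives \cref{eqn:RandAccept}. That column has $\|Me_{j^\ast}\|>0$, hence $g_{j^\ast}>0$.

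Within one outer iteration, each failed block is removed from $\hat\J$, and zero blocks are removed as well. A block containing $j^\ast$ cannot fail: the argmin over the block is at most the value at $j^\ast$, so that block passes the test. Consequently $\hat\J$ shrinks strictly until a block containing an acceptable column is drawn, and since $j^\ast$ is never discarded, this happens after at most $\lceil n/k_b\rceil$ samples. So each of the $m$ outer iterations terminates deterministically, and $|\C|$ reaches $m$ regardless of the random draws. The points to verify here are that \textsc{SampleBlock} only draws from $\hat\J$ and that the weights of removed columns are irrelevant.

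Finally, with the recurrence established for $i=0,\dots,m-1$, I would repeat the induction and the $s_i$ conversion from the proof of \cref{thm:BSpivot} word for word. Interlacing on the leading $i\times i$ blocks of the final $R_{11}$, whose singular values coincide with those of $Z(\C,:)$, then gives the stated bounds. The remark about $n<2m$ needs no special treatment, because the recurrence is only required as an upper bound and is guaranteed attainable.
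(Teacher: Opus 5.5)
Your proposal is correct and follows the paper's own argument. The acceptance test on line~\ref{line:fcheck} is exactly \cref{eqn:RandAccept}, and line~\ref{line:fupdate} keeps $f=\|R_{11}^{-1}\|_F^2$, so the rest of the proof of \cref{thm:BSpivot} carries over unchanged. Termination follows because an acceptable column with nonzero weight always exists, and rejected blocks are removed from $\hat{\J}$ until such a column is found; your write-up just spells out the bookkeeping and termination details that the paper compresses into a few sentences.
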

\begin{proof}
The result follows the control over the growth of $f_i$ ($f$ in \cref{alg:randBSQR}). Specifically, \cref{line:fcheck} ensures that \cref{eqn:RandAccept} and therefore \cref{eqn:sbound} are satisfied; \cref{line:fupdate} makes $f$ track $\|R_{11}^{-1}\|_F^2.$ Because the sampling weights are non-zero for non-zero columns and rejected columns are not resampled till at least one new column has been found, we will find a suitable column (whose existence is guaranteed because $A$ has orthonormal rows; see the proof of \cref{thm:BSpivot}) in finitely many steps. 
\end{proof}

\subsection{Experimental setup}
We provide an optimized implementation of \cref{alg:randBSQR} as C++ MEX file usable from MATLAB~\cite{matlab}.\footnote{The implementation contains a few minor deviations from the stylized algorithm presented here, such as safeguards when using norm updating formula (see \cref{secA:BSpivoting}, additional blocking for computational efficiency, and better management of the Householder reflectors using WY form.} The implementations, tests, and code to reproduce figures are available at \url{https://github.com/asdamle/BischofStewartQR}. All experiments were performed on a MacBook Air with an M3 processor and 16 GB of RAM using the Apple Accelerate framework. Timings are computed using \texttt{timeit} to ensure stable results.

\subsection{Matrices}
Throughout these experiments we will primarily use three different test matrices with orthogonal rows. All originate from an $m\times n$ matrix $G$ with iid $\mathcal{N}(0,1)$ entries. The \emph{gaussian} matrices are constructed by orthogonalizing the rows. To build \emph{spiked leverage} matrices and \emph{needle} (simulating a "needle in a haystack" problem) we first weight the columns before orthogonalizing the rows. In the spiked leverage case we place a weight of 100 on $\lceil 3m/2\rceil$ columns and 1 on the remainder. In the needle example we place a weight of 1 on $\lceil 5m/4\rceil$ columns and $10^{-6}$ on all others. 

A few more test matrices appear sporadically. The \emph{graded leverage} matrices start with $G$ as above and then weight the columns with logarithmically spaced weights between 1 and $10^{-3}$ before orthogonalizing the rows. The \emph{coherent} matrices are built by picking $\lceil m/4 \rceil$ random vectors in $\R^m$ and then building a matrix with $n$ columns by uniformly at random sampling one of those vectors to produce each column of $A,$ and adding $5\%$ noise to each column and then orthogonalizing the rows. The \emph{collinear cluster} matrices again start from $G,$ but replace the first 20\% of the columns with a large fixed vector $v_0$ (chosen at random) plus a small amount of noise, and then orthogonalize the rows. Finally, the \emph{chebyshev} matrices sample the first $m$ Chebyshev polynomials at $n$ random points on $[-1,1]$ and then orthogonalizing the rows.

\subsection{Parameter choices}
There are a few parameter choices worth discussing. Selecting $k_b$ columns at a time yields practical efficiency, as does allowing for the selection of multiple columns out of each block. However, there are some points of tension highlighted by \cref{fig:kb}. First, we see that there is a trade-off between efficiency and $\|R_{11}^{-1}\|_F$. While the bound on $\|R_{11}^{-1}\|_F$ is always satisfied, in some cases there is a small benefit to picking larger blocks. From an efficiency perspective the optimal block size is around $k_b = 32,$ though that varies slightly with $m$. If $k_b$ is too large we end up sampling many more columns in total. Practically, we find $k_b = m$ to provide a good balance between performance and conditioning. There is also a choice in terms of the sampling distribution for columns. Sampling proportional to the initial squared column norms (i.e., $\|A(\mc,j)\|_2^2$) is a good choice and only adds a one time pre-processing step of $\bigO(mn)$. We refer to this scheme as norm-weighted sampling. In certain circumstances uniform sampling could be viable, but in adversarial settings it leads to sampling many more columns than necessary as illustrated in \cref{fig:sampling}.

\begin{figure}[t]
\centering
\includegraphics[width=\linewidth]{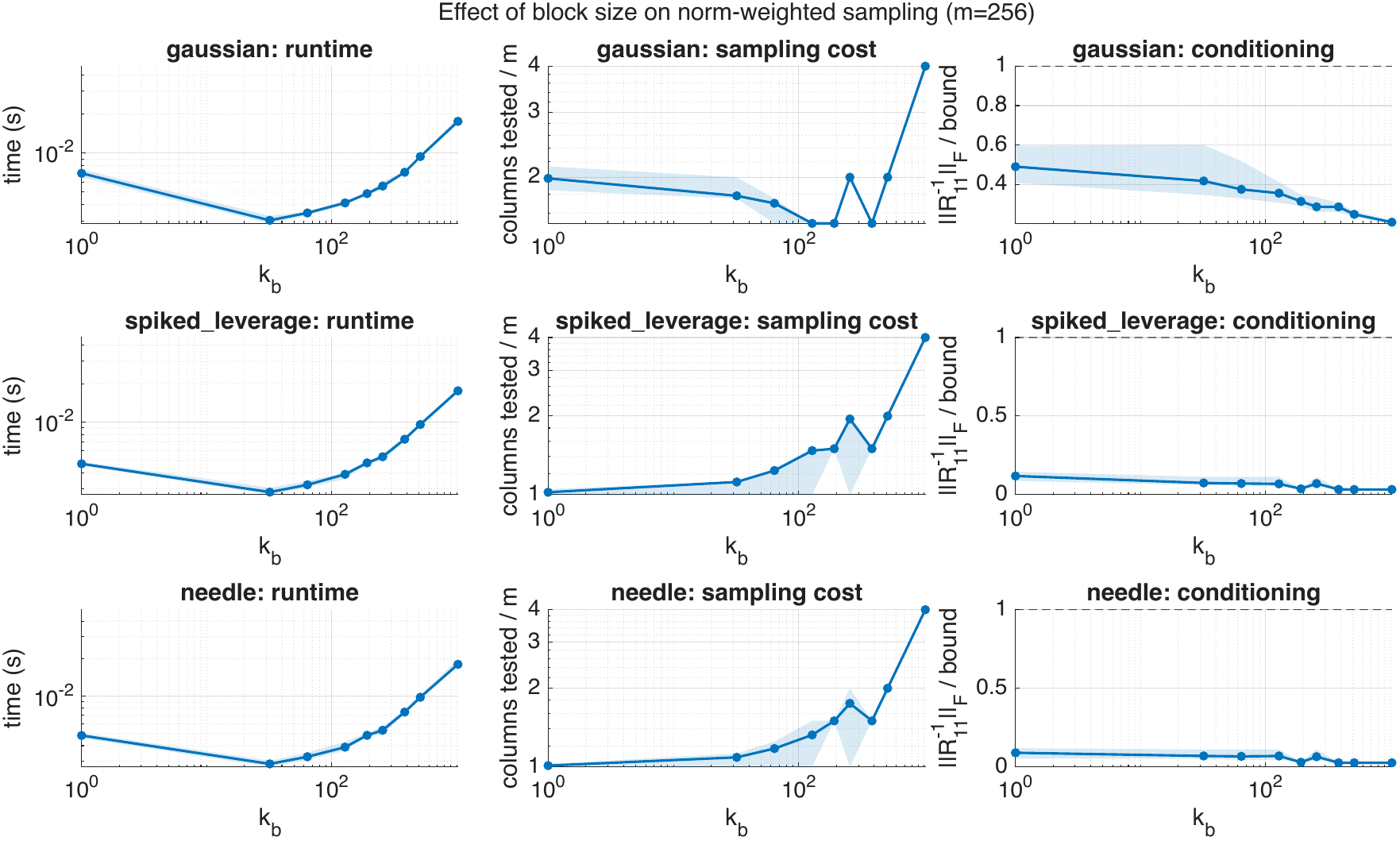}
\caption{When using norm-weighted sampling: time taken to compute the column subset (left), total columns sampled (middle) and $\|R_{11}^{-1}\|_F$ relative to the bound from \cref{cor:osinsky} (right) as a function of block size $k_b$. Each row corresponds to a different test matrix with orthonormal rows. $\|R_{11}^{-1}\|_F$ improves slightly with increased block size, albeit with a corresponding increase in time. The total number of sampled columns is always a small multiple of $m$. Data points are medians over 20 instances of the random matrix and shaded regions indicate min/max values.}
\label{fig:kb}
\end{figure}

\begin{figure}[t]
\centering
\includegraphics[width=\linewidth]{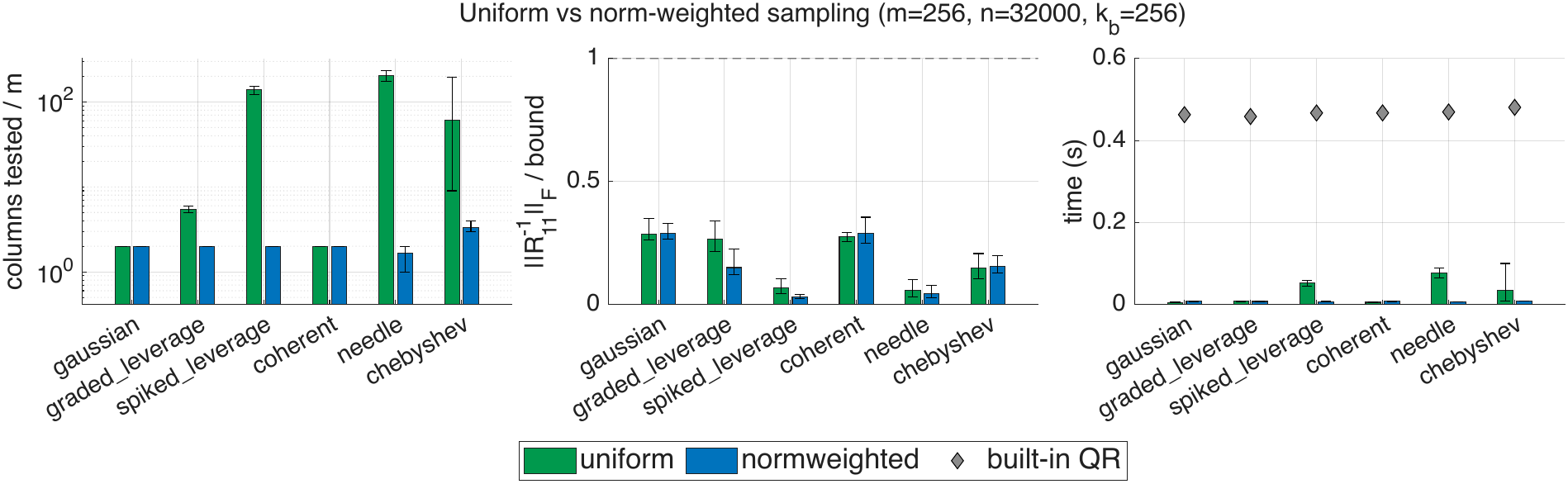}
\caption{Number of columns sampled (left), $\|R_{11}^{-1}\|_F$ (middle) and time taken to compute the column subset (right) for randBSQR with $k_b=m$ when either uniform or norm-weighted sampling is used. Bars represent medians over 20 trials with each matrix type and whiskers represent min/max observations over those trials. Norm-weighted sampling consistently results in $T_s$ being a small constant; uniform sampling is fine in some cases but catastrophic in others.}
\label{fig:sampling}
\end{figure}

\subsection{Computational cost}
The deterministic variant of Bischof-Stewart pivoting has the same asymptotic cost\footnote{For simplicity here we will always assume the algorithm is run for $m$ steps.} as a QR factorization: $\bigO(mn\mmn)$. One drawback of the randomized setting is that we cannot fully leverage the efficient updates of $R_{11}^{-1}R_{12}$ or column norms of $M$ since we are not tracking those quantities. Instead, each time we sample a block of columns we must compute its portion of $R_{12},$ $R_{11}^{-1}R_{12},$ and $M$. Assume we have selected $\ell$ columns so far, then this requires applying $\ell < m$ Householder reflectors, $\bigO(m \ell k_b)$, and running backwards substitution with $k_b$ right hand sides, $\bigO(\ell^2 k_b)$. Running Bischof-Stewart pivoting forward costs at most $\bigO(m k_b \min{(m,k_b)})$ (assuming we get $\min{(m,k_b)}$ pivots out of the sample.) The key point is that each time we sample a block of columns the work done on that block is at most $\bigO(m^2 k_b)$.

The overall complexity of \cref{alg:randBSQR} is, therefore, entirely dependent on how many blocks we have to sample, $T_s$, to find $m$ total columns. Unfortunately, this is not so easy to analyze. For uniform sampling it could be quite bad. However, for norm-weighted sampling we observe that across a broad range of test matrices $T_s$ is a small constant when $k_b = m$ (see \cref{fig:sampling}). The cost of randomized Bischof-Stewart pivoting can be cleanly split into two parts: a deterministic part and a non-deterministic part. For norm-weighted sampling the fixed setup cost is $\bigO(mn),$ where we compute the squared column norms and build a Fenwick tree~\cite{Fenwick} to allow for efficient sampling without replacement. The non-deterministic cost\footnote{We do have the, somewhat useless, bound $T_s \le m\frac{n}{k_b}$. The worst case is if at each step we need to sample every column before finding only 1 that satisfies the desired bound. In such a setting the cost of this algorithm actually becomes roughly $\bigO(nm^3)$ and would be more expensive than the deterministic variant.} is $\bigO(T_s m^2 k_b + T_s k_b \log{n}),$ where the $T_s k_b \log{n}$ arises from sampling the blocks with the aforementioned tree. If $R_{12}$ is desired, we pick up a $\bigO(m^2 n)$ post processing cost.
\begin{remark}
In some settings uniform sampling may be viable (see, e.g.,~\cite{cortinovis2025sublinear}), i.e., $T_s$ is a small constant with uniform sampling. In such settings randomized Bischof-Stewart pivoting can compute the desired column subset with only logarithmic dependence on $n$. However, as we see in \cref{fig:largen} this is challenging to observe in practice.
\end{remark}

\subsection{Performance}
There are two natural comparison points for \texttt{randBSQR}: (1) how much more efficient is it than the deterministic variant and the standard Golub-Businger pivoting scheme (accessed via \texttt{DGEQP3}), and (2) how does it compare to other randomized methods applicable to this setting such as randomly-pivoted QR (RPQR)~\cite{cortinovis2026adaptive,epperly2025adaptive} (using the efficient implementation from~\cite{epperly2025adaptive} that leverages rejection sampling as in~\cite{Barthelme2023faster,derezinski2019minimax}). Notably, RPQR only provides guarantees in expectation on column (or row) based approximation error of an underlying matrix using a GKS like scheme. In fact, in $\|\cdot\|_F$ it provides stronger guarantees than \mGKS\ owing to its connections to volume sampling. We do not observe such behavior in practice. In contrast, \cref{alg:randBSQR} provides guarantees on the quality of the submatrix selected and \cref{sec:mGKStheory} turns these into approximation error bounds.

Other relevant work includes~\cite{armstrong2025collect} and~\cite{fakih2025efficient}, neither of which are limited to the case of orthonormal rows. The former guarantees the same output as Golub-Businger QR but avoids excessive updates of columns that will, provably, not be selected. The latter leverages sparse sketches from the right (in contrast to sketches from the left, which are irrelevant when $m\ll n$ and we want $m$ columns) to identify a small number of candidate columns from which a good set can then be selected. While similar ideas could, perhaps, be paired with the ideas of \cref{alg:randBSQR}, the results in~\cite{fakih2025efficient} suggest the cost of sketching limits the speedup compared to what we observe here. Similarly, while the algorithm in~\cite{armstrong2025collect} is more general than \texttt{randBSQR} the speedup over \texttt{DGEQP3} lags what we observe here in the specialized setting. 

First, we consider the speedup achieved by \cref{alg:randBSQR} relative to both our own optimized deterministic version (see \cref{subA:illustrative_performance}) and the built-in QR with column pivoting (i.e., \texttt{DGEQP3} as accessed via MATLAB). \Cref{fig:speedup} shows that for large $n$ \cref{alg:randBSQR} consistently offers a two order of magnitude speedup over the deterministic algorithms when only the selected columns are desired. Even when $R_{12}$ is desired we still achieve an order of magnitude speedup. This does come at some cost. \Cref{fig:quality} shows that the quality of the selected subset does lag that of the version that looks at all the columns in some settings. This is expected given the relatively small number of columns that we consider. Nevertheless, the randomized variant always satisfies the theoretical assurances provided.  

\begin{figure}[t]
\centering
\includegraphics[width=\linewidth]{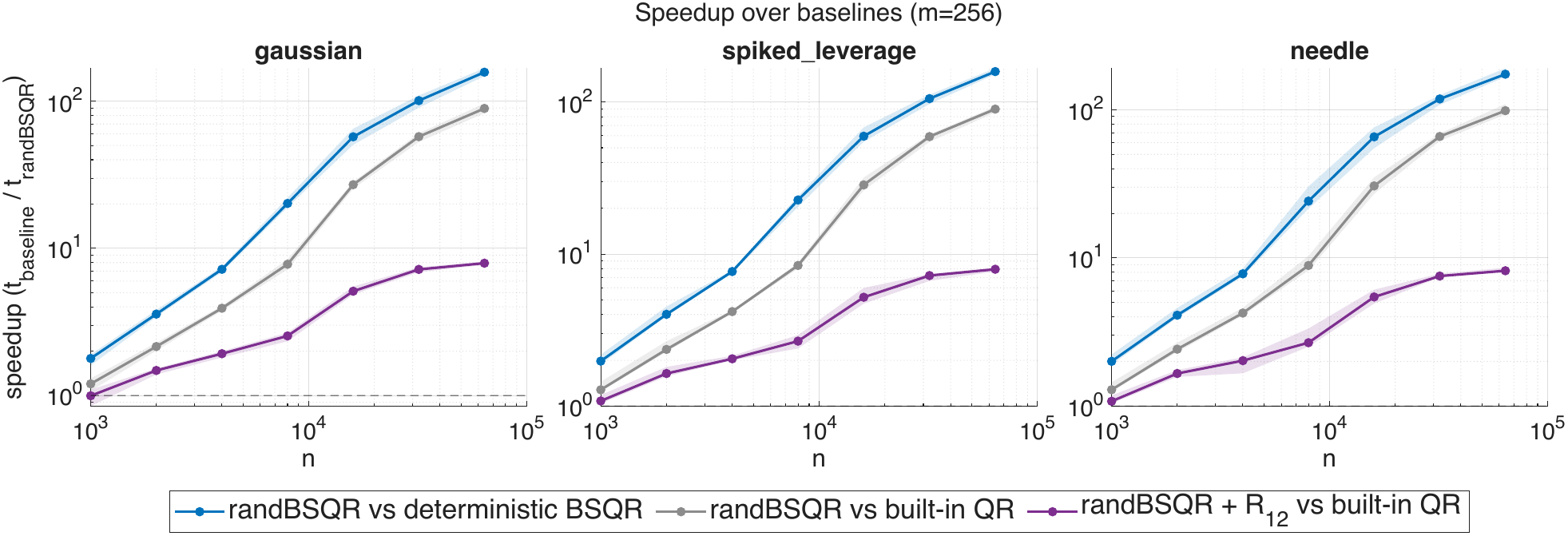}
\caption{Speedup of \cref{alg:randBSQR} relative to deterministic Bischof-Stewart pivoting and a built-in pivoted QR factorization. The solid line represents the median of 20 trials (i.e., distinct instances of the randomly generated matrices) and shaded regions indicate min/max observations. When we do not require $R_{12}$ it is possible to achieve speedups of two orders of magnitude for large $n$; even if $R_{12}$ is desired a significant speedup is achieved. A byproduct of this data is that our C++ MEX implementation of \cref{alg:bsqr} has performance comparable to the built-in QR (always within a factor 2).
}
\label{fig:speedup}
\end{figure}

\begin{figure}[t]
\centering
\includegraphics[width=\linewidth]{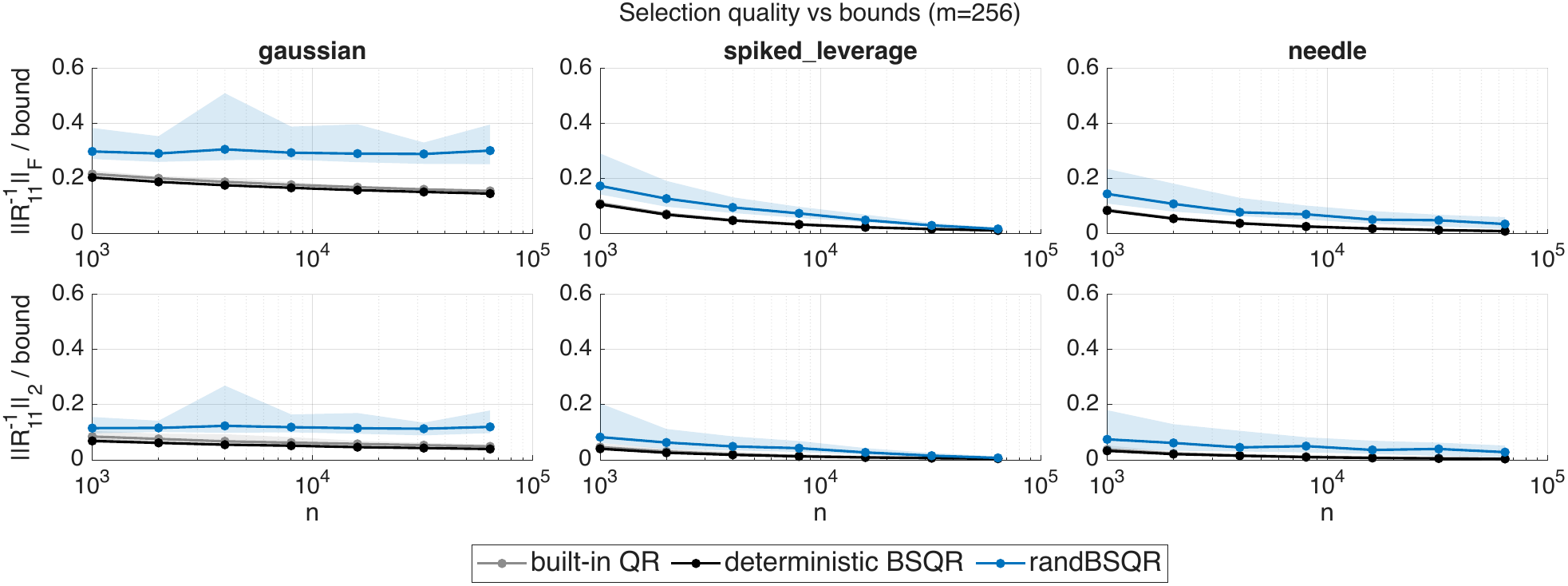}
\caption{Quality of the selected subset from \cref{alg:randBSQR}, deterministic Bischof-Stewart pivoting, and the built-in pivoted QR relative to theoretical bounds from \cref{cor:osinsky}. The solid line represents the median of 20 trials (i.e., distinct instances of the randomly generated matrices) and shaded regions indicate min/max observations. While we observe some degradation relative to the deterministic variant, particularly for the Gaussian case, \cref{alg:randBSQR} always satisfies the bound---as it must. The pivoted QR algorithm of Golub and Businger performs quite well here, albeit without theoretical assurances.
}
\label{fig:quality}
\end{figure}

Now, we explore the relative performance of \cref{alg:randBSQR} and RPQR. First, \cref{fig:RPQRquality} compares the quality of the selected subset. The only theoretical guarantees for RPQR are within the \mGKS\ framework and pertain to the $\|\cdot\|_F$ approximation accuracy---though we expect a reasonable subset to be found. We observe that randomized Bischof-Stewart consistently outperforms RPQR on metrics related to $R_{11}^{-1}$. In fact, RPQR regularly fails to satisfy the theoretical bounds \cref{thm:randBSpivot}, which \cref{alg:randBSQR} guarantees. Notably, this improved performance does not come at the expense of computational cost. \Cref{fig:largen} shows that \cref{alg:randBSQR} is consistently about twice as fast as RPQR in the asymptotic regime. Because of its connections to volume sampling RPQR benefits from more precise theory about its runtime (in terms of the number of columns that need to be sampled); \cref{alg:randBSQR} lacks such assurances. Nevertheless, \cref{fig:largen} suggests that $T_s$ behaves similarly to the sampling requirements for (blocked) RPQR (see~\cite{epperly2025adaptive}).

\begin{figure}[t]
\centering
\includegraphics[width=\linewidth]{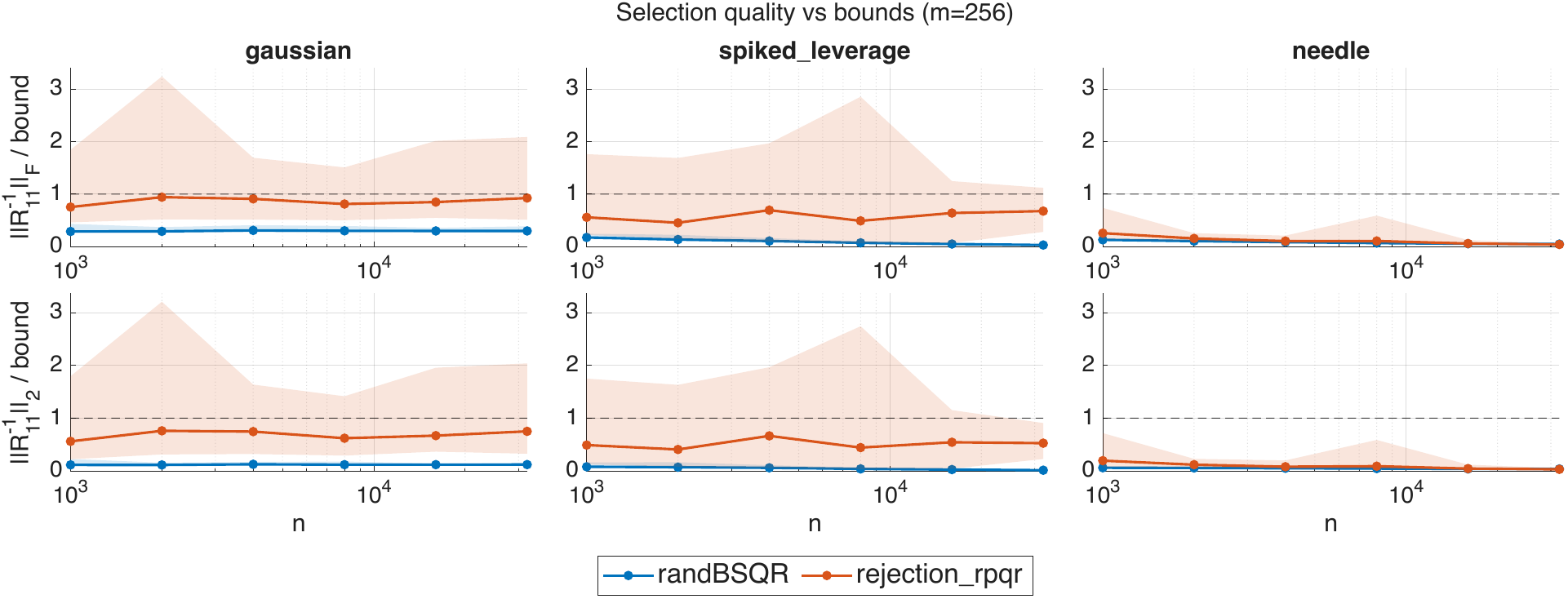}
\caption{Quality of the selected subset from \cref{alg:randBSQR} and RPQR relative to the bound from \cref{cor:osinsky}. The solid line represents the median of 20 trials (i.e., distinct instances of the randomly generated matrices) and shaded regions indicate min/max observations. \Cref{alg:randBSQR} always satisfies its theoretical guarantees; RPQR cannot provide the same assurances. 
}
\label{fig:RPQRquality}
\end{figure}

\begin{figure}[t]
\centering
\includegraphics[width=0.66\linewidth]{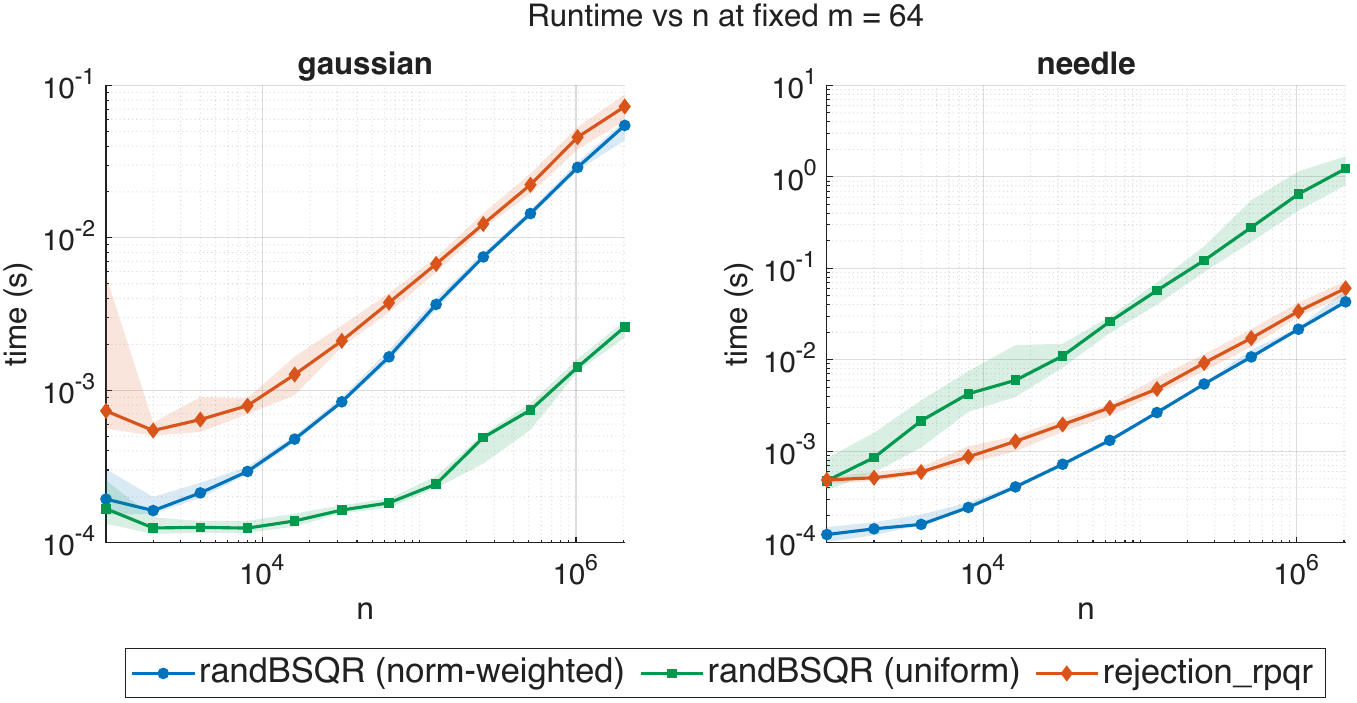}
\caption{Compute time for \cref{alg:randBSQR} with norm-weighted sampling and uniform sampling, and RPQR. The solid line represents the median of 20 trials (i.e., distinct instances of the randomly generated matrices) and shaded regions indicate min/max observations. Notably, we see a persistent, asymptotically stable gap between \cref{alg:randBSQR} and RPQR of around 2. In some settings, uniform sampling can be practically effective. However, without knowledge of the distribution of column norms ahead of time it is virtually impossible to know whether one is in the situation on the left or the right. Norm-weighted sampling has more consistent performance.
}
\label{fig:largen}
\end{figure}

Finally, we consider how the \mGKS\ framework performs when using randomized Bischof-Stewart or RPQR. To accomplish this we consider synthetic matrices with prescribed spectrum and varying right singular vectors. In \cref{fig:mgks} we see that even though $\|R_{11}^{-1}\|$ and, therefore, $R_{11}^{-1}R_{12}$ behave considerably better for randomized Bischof-Stewart, the projection error $\|A-P_\C A\|_2$ is often similar for both methods---though they do differ, with randomized Bischof-Stewart outperforming RPQR, around sharp drops in singular values. Randomized Bischof-Stewart does identify a more stable basis for interpolation (as evidenced by the maximum magnitude entry in $R_{11}^{-1}R_{12}$ being smaller). When an oblique projector is used to determine the low-rank approximation (i.e., up to a permutation from the right our low-rank approximation is $A(\mc,\C)\begin{bmatrix} I & R_{11}^{-1}R_{12}\end{bmatrix}$) randBSQR consistently outperforms RPQR.

\begin{figure}[t]
\centering
\includegraphics[width=\linewidth]{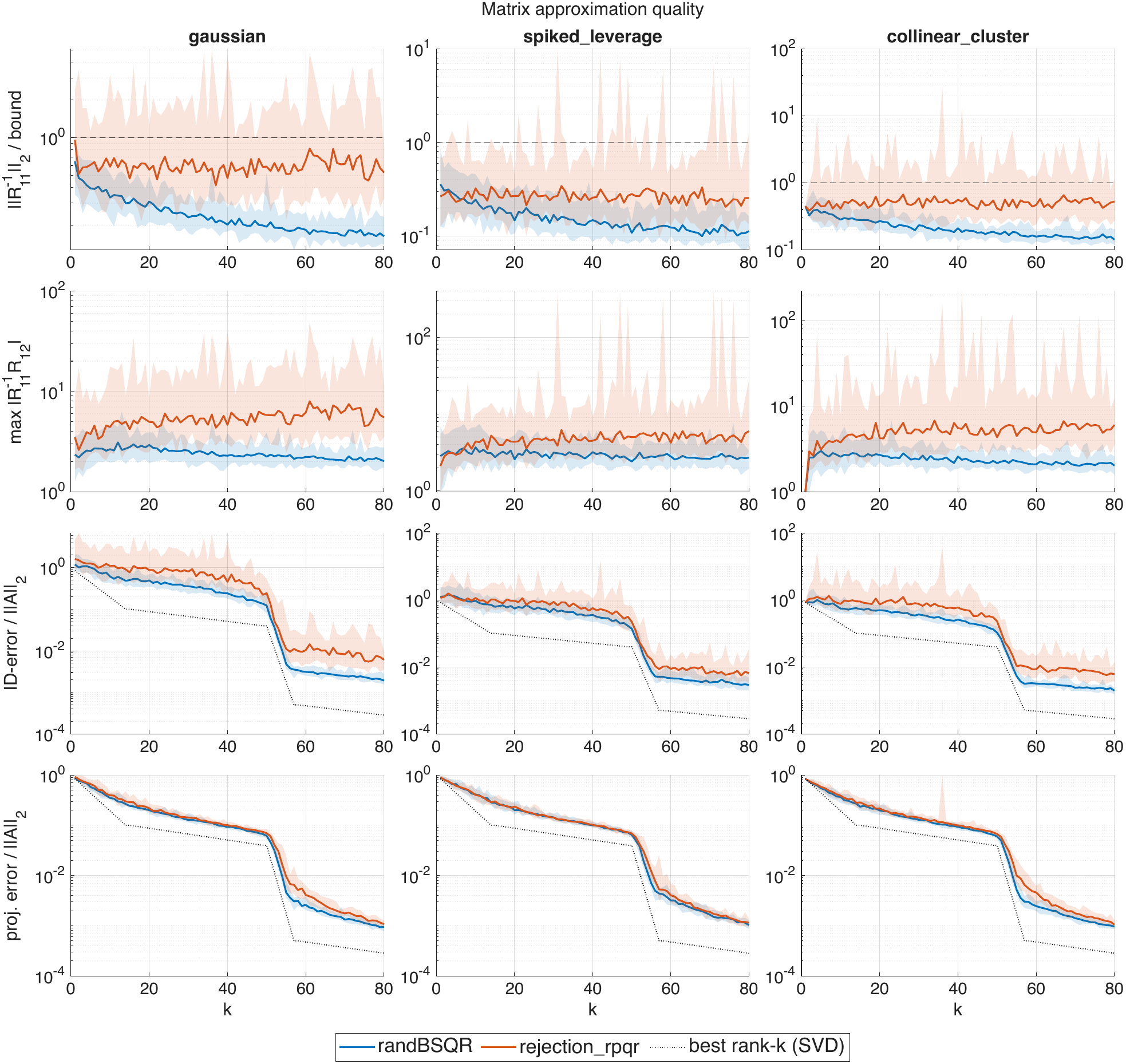}
\caption{Quality of the basis selected from $V_k^T$ in \mGKS\ using randomized Bischof-Stewart and RPQR in terms of $\|R_{11}^{-1}\|_2$ (top) and $\lvert R_{11}^{-1}R_{12}\rvert_{\max}$ (2nd from top). Low-rank approximation error when using an oblique ($\|A\Pi^T - A(\mc,\C)\begin{bmatrix} I & R_{11}^{-1}R_{12}\end{bmatrix}\|_2$) projector (3rd from top) and orthogonal ($\|A-P_\C A\|_2$) projector (bottom). The singular values of the $2000\times 2000$ matrices were explicitly specified and the leading $k$ right singular vectors were drawn from the respective distributions. The solid line represents the median of 20 trials (i.e., distinct instances of the randomly generated right singular vectors) and shaded regions indicate min/max observations.
}
\label{fig:mgks}
\end{figure}

\section{Discussion} 
\label{sec:discussion}	

There are several natural questions raised by this work that we briefly discuss. In addition we highlight some applications where the results could have immediate impact.

\subsection{Improving Frobenius norm approximation bounds}
Owing to its connections to volume sampling~\cite{deshpande2006adaptive,deshpande_rademacher_projectiveclustering,deshpande_rademacher_volumesampling}, in the \mGKS\ framework RPQR yields strong theoretical guarantees~\cite{cortinovis2026adaptive} for Frobenius norm approximation error. A natural question is if Bischof-Stewart satisfies similar ($n$-independent) bounds, i.e., if \cref{thm:mGKS} is loose in $\|\cdot\|_F$. Osinsky shows a greedy strategy can achieve optimal Frobenius norm bounds, but it requires access to the residuals. So, randomness may not be, strictly speaking, necessary. The more pertinent question is if it's possible to identify a good submatrix of $V_k$ but observe poor approximation error; while this is not possible in the two norm, the answer is less clear in the Frobenius norm where it is often possible to ``hide'' error by, e.g., replicating columns.

\subsection{Picking rows and columns}
We have framed the most natural application of Bischof-Stewart pivoting as within the \mGKS\ framework, and a clear use of the selected columns is as part of a low-rank approximation. One may also wish to build a two-sided interpolatory low-rank approximation where columns of $A$ are used to approximate the column space and rows of $A$ are used to approximate the row space---a $CUR$ factorization~\cite{mahoney_cur}. The path to using Bischof-Stewart in this context is clear. We simply run \mGKS\ on $A^T$ to identify rows and use those as part of the factorization. Such a scheme readily admits theoretical bounds~\cite{mahoney_cur,sorensen_embree_deim} based on the quality of just a column (or row) based approximation.

\subsection{Cholesky factorizations}
There are close connections between pivoted QR factorizations and pivoted Cholesky factorizations. In particular between RPQR and randomly pivoted Cholesky~\cite{chen2025randomly,epperly2025embrace}. Therefore, a natural question is what the analogue of Bischof-Stewart pivoting is for Cholesky factorizations---presumably it corresponds to not just pivoting based on (progressively updated) diagonal entries but rather on a combination of diagonal entries and interpolation coefficients. 

\subsection{Matrices without orthonormal rows}
Applying Bischof-Stewart pivoting to general matrices $A$ could lead to improved performance over Golub-Businger pivoted QR in certain settings. However, as noted by Stewart~\cite{stewart1990incremental} the worst case performance is similar. Moreover, the randomized variant cannot be easily extended to this setting as it is unclear what acceptance criterion to use when selecting a column.

There is one setting where Bischof-Stewart is still useful---identifying a well-conditioned square submatrix of a short wide matrix $W\in\R^{m\times n}$ with $m < n$. In this case the best possible conditioning of the submatrix is dictated by the singular values of $W$. While we can no longer prove that Bischof-Stewart can be used to compute a strong rank-revealing factorization, there is a way to identify $m$ columns of $W$ whose minimal singular value is as large as we would get from a strong rank-revealing factorization. Specifically, if we compute the reduced QR factorization $W^T = QR$ and then apply (randomized) Bischof-Stewart pivoting to $Q^T$ it identifies a subset $\C$ of columns of $W$ such that $\smin(W(\mc,\C)) \geq \smin(W)/\sqrt{1+m(n-m)}$. This result follows immediately from \cref{thm:BSpivot} and a singular value variant of Ostrowski's theorem for eigenvalues~\cite{ostrowski1959quantitative}.

\subsection{Applications}
Even with the restriction to orthonormal rows, there are several applications that could immediately benefit from this work. For example, problems in electronic structure theory~\cite{damle2015compressed,damle2018disentanglement}, model reduction~\cite{drmac_qdeim}, and optimal design~\cite{eswar2026Doptimal} all directly rely on pivoted QR factorizations of short wide matrices with orthonormal rows. In such applications it is common to appeal to strong rank-revealing QR factorizations for theoretical guarantees, but to simply use \texttt{DGEQP3} in practice. \texttt{randBSQR} can provide the necessary theoretical assurances while simultaneously providing a means to compute the desired submatrix/columns much more efficiently than is possible with \texttt{DGEQP3}.

\appendix

\section{Bischof and Stewart's modified pivoting criterion, in detail}
\label{secA:BSpivoting}
We present the pivoting strategy of Bischof and Stewart in detail. In addition, we provide experimental results from two implementations to show the performance relative to existing, optimized codes (specifically, \texttt{DGEQP3} leveraging the Apple Accelerate framework). For clarity, we present the algorithm in terms of the linear algebra involved with a focus on how the next pivot is selected---``standard'' approaches would be used to ensure computational efficiency. We make a few remarks about the numerical implementation when relevant.

\subsection{The algorithm}
\label{subA:BSdetails}
Given $A\in\Rmn$ for any $0 \leq i <\mmn$ assume we have the partially computed (full) QR factorization
\begin{equation}
\label{eq:ApivotedQR}
(Q^{(i)})^T A\Pi^{(i)} = \begin{bmatrix}R_{11}^{(i)} & R_{12}^{(i)} \\ & M^{(i)}\end{bmatrix}
\end{equation}
where $Q^{(i)}$ is orthogonal, $R_{11}\in\R^{i\times i}$ is upper triangular, $M\in\R^{(m-i) \times (n-i)}$, and $R_{12}\in\R^{i\times (n-i)}$. To make this well specified, we adopt the convention that given $\Pi^{(i)}$ all other matrices are those computed via the first $i$ steps of a QR factorization without pivoting. By convention $\Pi^{(0)} = I,$ though the final output of the algorithm is essentially independent of the choice of $\Pi^{(0)}$ if no ties arise in the pivot selection. 

Given $\Pi^{(i)}$ we describe how to compute $\Pi^{(i+1)}$. The algorithm then reduces column $i+1$ of $A\Pi^{(i+1)}$ to upper triangular form using a Householder reflector, increments $i$ to $i+1$ and continues until $i = \mmn$ (unless a partial factorization is explicitly desired). The design choice of Bischof-Stewart is for the first $i$ columns of $\Pi^{(i+1)}$ to be those of $\Pi^{(i)}$ and to choose the $i+1$ column of $\Pi^{(i+1)}$ to minimize $\|(R_{11}^{(i+1)})^{-1}\|_F$ subject to this constraint. \Cref{alg:bsqr} provides an algorithmic description of the pivoting scheme; it is written for clarity rather than performance (though it does include the necessary steps to ensure the asymptotic cost is generically $\bigO(mn\mmn)$ and $\bigO(mnk)$ if stopped after $k$ steps). When $m>n$ the algorithm is easily adapted to return a reduced QR factorization if $k = \rank{A}$ by dropping rows of $R$ that are zero.
\begin{remark}
The Bischof-Stewart scheme is greedy, just like the one of Golub and Businger. Once a column is chosen as a pivot it can never be ``removed.'' In contrast, strong rank-revealing algorithms typically require ``swaps'' where previously selected columns in $\Pi^{(i)}$ can be evicted if later determined to have been a poor choice~\cite{gu_srrqr,chandrasekaran_1994,damle2025estimating}. While schemes with swaps can generally achieve better theoretical bounds, in theory the required number of swaps could be large and result in significant computational overhead.
\end{remark}

\begin{algorithm}[t]
\caption{Bischof-Stewart pivoted QR.}
\label{alg:bsqr}
\begin{algorithmic}[1]
\Require $A\in\mathbb{R}^{m\times n}$; steps $r\le \rank{A}$ 
\State Initialize $R=A$; $\pi = \begin{bmatrix} 1 & 2 & \cdots & n\end{bmatrix}$; $i=0$; and $w_j = \begin{bmatrix}\phantom{1} \end{bmatrix}$ and $\rho_j^2 = \|A(\mc,j)\|_2^2$ for $j=1,\ldots,n$ 
\While{$i<r$}
\State $i=i+1$
\State Choose $\displaystyle \ell \in \argmin_{j\in\{i,\ldots, n\vert \rho_j>0\}} \frac{1+\|w_j\|_2^2}{\rho_j^2}$ \Comment{By convention $\|w_j\|_2 = 0$ when $i=1$} \label{line:BSpiv}
\State Swap $R(\mc,i)$ and $R(\mc,\ell),$ $\pi(i)$ and $\pi(\ell),$ $w_{i}$ and $w_\ell,$ and $\rho_{i}^2$ and $\rho_{\ell}^2$
\State $v_i =$ \Call{house}{$R(i\mc m,i)$} \Comment{$(I-2v_iv_i^T)R(i\mc m,i) = \pm \|R(i\mc m,i)\|_2e_1$}
\State $R(i\mc m,i\mc n) = R(i\mc m,i\mc n) - 2v_i(v_i^TR(i\mc m,i\mc n))$
\State $\rho_j^2 = \rho_j^2 - R(i,j)^2$ for $j=i+1,\ldots,n$
\State $\displaystyle w_j = \begin{bmatrix} w_j - \alpha_j w_i\\ \alpha_j\end{bmatrix}$ for $j=i+1,\ldots,n$, where $\alpha_j = R(i,j)/R(i,i)$ \label{line:wup}
\EndWhile
\State \Return $\pi$ (encoding $\Pi$), $R$, and the stored reflectors $v_i$ (an implicit $Q$) such that $A\Pi = QR$, an $r$ step partial QR factorization of $A$.
\end{algorithmic}
\end{algorithm}

Diving into \cref{alg:bsqr} we make two key observations. First, as illustrated in \cref{eq:RinvF_update} the pivoting strategy in \cref{line:BSpiv} due to Stewart~\cite{stewart1990incremental} explicitly minimizes the growth in the Frobenius norm of $(R_{11}^{(i)})^{-1}$ from one step to the next. Since our convention at the first step is that $\|w_j\|_2 = 0$, the first pivot selected by this scheme is the same as the first one selected by the classical Golub and Businger scheme. In the unlikely event of a tie at any step any reasonable choice is fine, e.g., picking the smaller index or picking one of the minimizing indices at random.

The second key observation is that \cref{line:wup} is actually maintaining the columns of $(R_{11}^{(i)})^{-1}R_{12}^{(i)}$ (see~\cite[eq. 2.3]{stewart1990incremental}). This is important to ensure the desired computational scaling---a na\"{\i}ve scheme that recomputes these vectors at each step would pick up an extra factor of $k$ in the scaling. In fact, the update in \cref{line:wup} can be manipulated to show that we can also cheaply update the squared norm of $w_{j}$ as
\begin{equation}
\label{eq:wnormdowndate}
\|\hat{w}_{j}\|_2^2 = \|w_{j}\|_2^2 - 2\alpha_jw_{j}^Tw_{i} + \alpha_j^2(\|w_i\|_2^2+1).
\end{equation}
While useful, this formula is not needed to ensure the correct asymptotic scaling.
\begin{remark}
When updating the $\rho_j^2$ and the $\|w_{j}\|_2^2$ one should be careful of cancellation. This issue can be avoided by recomputing norms in certain cases without changing the asymptotic complexity of the algorithm. See~\cite{drmac2008failure} for details regarding $\rho_j^2;$ $\|w_j\|_2^2$ can be treated analogously. Our implementations contain these safeguards.
\end{remark}

\subsection{Illustrative performance}
\label{subA:illustrative_performance}
Though it has the same asymptotic complexity, the Bischof-Stewart strategy is more computationally intensive than the method of Golub and Businger (as practically manifest in the LAPACK routine \texttt{DGEQP3}). We provide two optimized implementations of Bischof-Stewart pivoted QR---one C++ mex file usable from MATLAB~\cite{matlab} and one in Julia~\cite{julia,Julia-2017}. The experimental setup mirrors that in \cref{sec:randBS}.

Informally, in the roughly square ($m\approx n$) or short and wide setting ($m \ll n$) performance can easily be within a factor 2 of the optimized LAPACK routine \texttt{DGEQP3}.\footnote{The tall thin case ($n \ll m$), while an important setting for column selection, is somewhat less interesting in the context of this manuscript. However, Bischof-Stewart QR can have performance closer to \texttt{DGEQP3} in this case. The cost of updating the current $R_{11}^{-1}R_{12}$ factor used as part of the pivot selection becomes inconsequential relative to the cost of applying the Householder reflectors.} In fact, in the short wide setting where it is often desirable to compute the interpolation coefficients $R_{11}^{-1}R_{12}$ the gap is even smaller---a standard pivoted QR requires a subsequent triangular solve to get these coefficients whereas Bischof-Stewart pivoting can get them at minimal additional cost since that quantity is related to the pivoting selection scheme. \Cref{fig:BSperfA} quantifies these observations in Julia. MATLAB experiments may be found in the main text.

\begin{remark}
In \cref{fig:BSperfA} it would be reasonable to expect that performance is insensitive to matrix structure. This is true for Bischof-Stewart pivoting across all three test cases and \texttt{DGEQP3} for the matrices with iid Gaussian entries and orthonormal rows. However, \texttt{DGEQP3} slows down for the ill-conditioned ($\kappa \approx 10^{10}$) case and, therefore, Bischof-Stewart looks comparatively better. A hypothesis for why we observe this behavior is that \texttt{DGEQP3} has some inefficiency (in terms of blocking) when norms have to be recomputed rather than using the updating formulas. Our implementation of Bischof-Stewart pivoting contains the same safeguards that result in norms being recomputed explicitly, but batches those computations to maintain efficiency.
\end{remark}

\begin{figure}[t]
\centering
\begin{minipage}[t]{0.5\textwidth}
	\vspace{0pt}
	\centering
	\includegraphics[width=\linewidth]{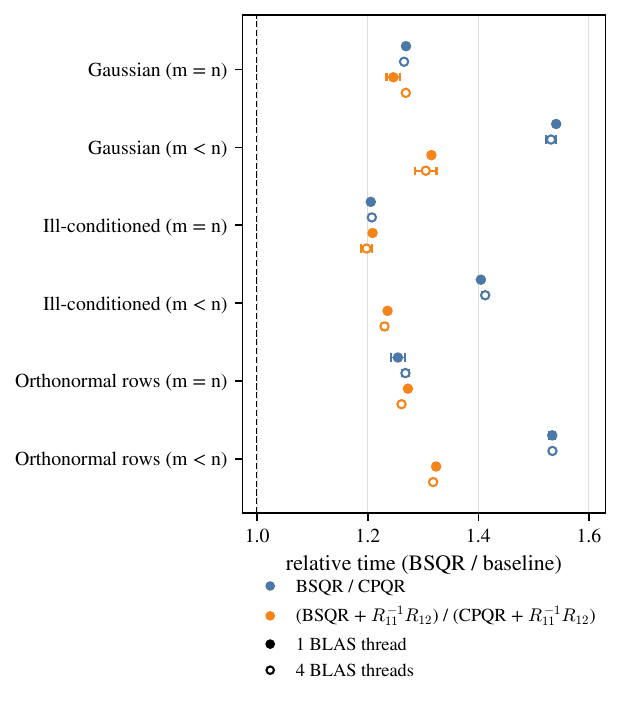}
\end{minipage}
\hfill
\begin{minipage}[t]{0.45\textwidth}
	\vspace{0pt}
	\centering
	\caption[Relative performance of Bischof-Stewart pivoting to \texttt{DGEQP3} in Julia.]{Relative performance of Bischof-Stewart pivoting to \texttt{DGEQP3} in Julia. Each data point amalgamates data across a range of problem sizes. In the square case this includes $m$ from 64 to 512, and in the short wide case this includes $m$ from 32 to 512 with $n$ ranging from $2m$ to $10m$. For each random seed, runtime ratios are aggregated by taking the geometric mean over all problem sizes in the group. The marker shows the geometric mean of these per-seed aggregates and the whiskers their full range. Each runtime is a median of many repeated runs (using BenchmarkTools in Julia), leading to the low variation. As expected, the performance gap typically closes when $R_{11}^{-1}R_{12}$ is desired since its computation is essentially a byproduct of the Bischof-Stewart scheme (when using \texttt{DGEQP3} an extra triangular solve is needed). This is particularly the case for the $m<n$ case. In all cases performance is comfortably within a factor 1.6.\\
	}
	\label{fig:BSperfA}
\end{minipage}
\end{figure}

\section*{Acknowledgements}

We would like to thank participants of the Simons workshop on complexity and linear algebra for many insightful discussions related to this work. In particular, discussions with Daniel Kressner, Ethan Epperly, Robert Webber, Ilse Ipsen, Mark Embree, Laura Grigori, Yuji Nakatsukasa, Mark Fornace, Alex Townsend, and Michael Lindsey sharpened our thinking of the relationship between \mGKS\ and Osinsky's results. Implementations were produced using Anthropic's Claude Code~\cite{claude}. As was code to run numerical experiments and generate plots. OpenAI Codex~\cite{codex} was used for code review and early prototyping. Deterministic implementations were validated against a carefully audited oracle implementation (albeit one that was far less performant). Both versions were extensively tested to assess correctness; these tests are included in the repository. AI tools (Anthropic's Claude and OpenAI's ChatGPT) were used to aid in proofreading parts of the manuscript and to provide detailed reviews of drafts that guided revisions.

\bibliographystyle{siam}
\bibliography{gks}

\end{document}